\setlist[itemize]{topsep=0ex,itemsep=0ex,parsep=0.4ex}
\setlist[enumerate]{topsep=0ex,itemsep=0ex,parsep=0.4ex}
\declaretheorem[name = Theorem, style = plain]{theorem} %numberwithin = section, 
\declaretheorem[name = Conjecture, numberlike = theorem, style = plain]{conjecture}
\declaretheorem[name = Lemma, numberlike = theorem, style = plain]{lemma}
\renewcommand{\epsilon}{\varepsilon}
\renewcommand{\geq}{\geqslant}
\renewcommand{\leq}{\leqslant}
\renewcommand{\emptyset}{\varnothing}
\DeclarePairedDelimiter{\abs}{\lvert}{\rvert} 
\DeclarePairedDelimiter{\floor}{\lfloor}{\rfloor} 
\DeclarePairedDelimiter{\set}{\{}{\}}
\DeclarePairedDelimiter{\parentheses}{(}{)}
\DeclareMathOperator{\Bin}{Bin}
\newcommand{\defn}[1]{\textcolor{Maroon}{\emph{#1}}}
\newcommand*{\EX}{\mathbb{E}}
\newcommand*{\OO}{\mathcal{O}}
\newcommand*{\PP}{\mathcal{P}}
\newcommand*{\prob}{\mathbb{P}}
\newcommand*{\vv}[1]{\boldsymbol{#1}}
\begin{document}
	
	\author{Ant\'onio Gir\~ao\footnotemark[2]
		\qquad Freddie Illingworth\footnotemark[2]
		\qquad Alex Scott\footnotemark[2]
		\qquad David R. Wood\footnotemark[3]}
	
	\title{\bf Defective Colouring of Hypergraphs}
	
	\maketitle
	
	\begin{abstract}
		We prove that the vertices of every $(r + 1)$-uniform hypergraph with maximum degree $\Delta$ may be coloured with $c(\frac{\Delta}{d + 1})^{1/r}$ colours such that each vertex is in at most $d$ monochromatic edges. This result, which is best possible up to the value of the constant $c$, generalises the classical result of Erd\H{o}s and Lov\'asz who proved the $d = 0$ case.  
	\end{abstract}
	
	\renewcommand{\thefootnote}{\fnsymbol{footnote}} % Make affiliation marks symbols
	
	\footnotetext[2]{Mathematical Institute, University of Oxford, United Kingdom (\texttt{\{girao,illingworth,scott\}@maths.ox.ac.uk}). Research supported by EPSRC grant EP/V007327/1.}
	
	\footnotetext[3]{School of Mathematics, Monash University, Melbourne, Australia  (\texttt{david.wood@monash.edu}). Research supported by the Australian Research Council and a Visiting Research Fellowship of Merton College, University of Oxford.}
	
	\renewcommand{\thefootnote}{\arabic{footnote}} % Returns to numbered footnotes
	
	\section{Introduction}\label{sec:intro}
	
	Hypergraph colouring is a widely studied field with numerous deep results~\citep{SS21,CM17,CM16,CM15,FM11,FM08,FM13,BFM10,KKL12,KK09,LP22}. %CF96,CDM14,DLR95,LC11
	In a seminal contribution, \citet{EL75} proved that every $(r + 1)$-uniform hypergraph with maximum degree $\Delta$ has a vertex-colouring with at most $c\Delta^{1/r}$ colours and with no monochromatic edge, where $c$ is an absolute constant. The proof is a simple application of what is now called the Lov\'{a}sz local lemma, introduced in the same paper. Indeed, hypergraph colouring was the motivation for the development of the Lov\'{a}sz local lemma, which has become a staple of probabilistic combinatorics.  
	
	This paper proves the following generalisation of the result of \citet{EL75}. Here a vertex-colouring of a hypergraph is \defn{$d$-defective} if each vertex is in at most $d$ monochromatic edges. The case $d = 0$ corresponds to the result of \citet{EL75} mentioned above. 
	\begin{theorem}
		\label{main}
		For all integers $r\geq 1$ and $d\geq 0$ and $\Delta\geq \max\set{d + 1, 50^{100r^4}}$, every $(r + 1)$-uniform hypergraph $G$ with maximum degree at most $\Delta$ has a $d$-defective $k$-colouring, where 
		\begin{equation*}
			k \leq  100 \parentheses*{\frac{\Delta}{d + 1}}^{1/r}.
		\end{equation*}
	\end{theorem}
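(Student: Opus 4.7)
The plan is a random-colouring argument combined with the Lovász Local Lemma (LLL). Set $k = \lceil 100(\Delta/(d+1))^{1/r}\rceil$ and colour each vertex of $G$ independently and uniformly from $[k]$. For each vertex $v$, let $B_v$ be the bad event that $v$ lies in more than $d$ monochromatic edges. The event $B_v$ is determined by the colours of the vertices in $N_v \coloneqq \{v\} \cup \bigcup_{e \ni v}(e \setminus \{v\})$, a set of at most $r\Delta+1$ vertices, so $B_v$ is mutually independent from $\{B_u : N_u \cap N_v = \emptyset\}$, giving a dependency degree of at most $(r\Delta+1)^2$. The symmetric LLL will therefore succeed once we show $\prob(B_v) \leq 1/(e((r\Delta+1)^2+1))$, and the hypothesis $\Delta \geq 50^{100r^4}$ should provide enormous slack for any polynomial-in-$r$ factors.

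To bound $\prob(B_v)$, condition on $v$ receiving colour $c$. The number of monochromatic edges through $v$ is then
\begin{equation*}
X_v = \sum_{e \ni v} \prod_{u \in e \setminus \{v\}} \mathbb{1}[u \text{ has colour } c],
\end{equation*}
a polynomial of degree $r$ in independent Bernoulli$(1/k)$ indicators with mean at most $\Delta/k^r = (d+1)/100^r$. Applying Markov to the $(d+1)$-st factorial moment,
\begin{equation*}
\prob(X_v \geq d+1) \leq \EX\left[\binom{X_v}{d+1}\right] = \sum_{S \in \binom{E(v)}{d+1}} k^{-\abs{V(S) \setminus \{v\}}},
\end{equation*}
which I would stratify by $m = \abs{V(S) \setminus \{v\}} \in \{r+d,\ldots,r(d+1)\}$. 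In the ``spread'' extreme $m = r(d+1)$ (the $d+1$ edges pairwise share only $v$), the count $\binom{\deg(v)}{d+1}$ yields a contribution of at most $(e/100^r)^{d+1}$, which is tiny.

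The main obstacle will be the ``collapsed'' regime, where the $d+1$ edges through $v$ overlap heavily off $v$: the per-tuple probability $k^{-m}$ is then larger, and one must show the number $N_m$ of such tuples is correspondingly small. The naive bound using $m \geq r+d$ combined with $N_m \leq \binom{\Delta}{d+1}$ fails by a factor of $\Delta^{d(r-1)/r}$ once $r \geq 2$, so a refined count must exploit that the link of $v$ is an $r$-uniform hypergraph of maximum degree at most $\Delta$; a natural approach is to build each $(d+1)$-tuple one edge at a time and charge any newly-added link-vertex against the bounded co-degree. I expect this overlap analysis, together with the slack needed to beat the LLL dependency $(r\Delta+1)^2$, to be precisely what forces the lower bound $\Delta \geq 50^{100r^4}$, with the exponent $r^4$ arising from several nested $r$-dependent factors (the $r$ vertices per link-edge, the uniformity $r$ itself, an ordering of the $d+1$ edges, and constants in the polynomial-concentration bookkeeping).
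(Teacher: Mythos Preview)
Your single-round LLL on the vertex events $B_v$ cannot work, and the obstruction is exactly the one the paper identifies at the start of \S2.1. With $k=\lceil 100(\Delta/(d+1))^{1/r}\rceil$ the mean monochromatic degree $\EX X_v$ is at most $(d+1)/100^{r}$, so Markov gives $\prob(B_v)\le 100^{-r}$ --- a constant in $\Delta$ --- while your dependency degree is of order $(r\Delta)^2$. The LLL therefore requires $\prob(B_v)=O(\Delta^{-2})$, and no amount of moment refinement will get you there: already for $d=0$ the exact probability $\prob(X_v\ge 1)$ is bounded \emph{below} by a positive constant. Indeed, if the $\Delta$ edges through $v$ are pairwise disjoint off $v$, then $\prob(X_v\ge 1)=1-(1-k^{-r})^{\Delta}\to 1-e^{-100^{-r}}>0$ as $\Delta\to\infty$. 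Your factorial-moment bound $\sum_S k^{-|V(S)\setminus\{v\}|}$ reproduces this: for $d=0$ it is literally $\EX X_v$, and for general $d$ the spread contribution $(e/100^{r})^{d+1}$ is still independent of $\Delta$, so it beats $\Delta^{-2}$ only when $d\gtrsim \log\Delta$, not for all $d\ge 0$.

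The paper circumvents this by \emph{not} insisting that every vertex be good in a single round. Instead it runs a nibble: colour randomly, freeze the good vertices, and recurse on the induced subhypergraph $G'$ of bad vertices with a fresh palette. The LLL is applied not to $B_v$ but to the much rarer event that $v$ is \emph{terrible} (incident to more than $2^{-r}\Delta$ edges all of whose vertices are bad), which does have probability $\le\Delta^{-5}$; establishing this is the technical heart of the paper and uses sunflower decompositions of the link hypergraph together with separate arguments for large and small $k$. The geometric decay $\Delta(G')\le 2^{-r}\Delta$ then keeps the total colour count bounded. Your overlap analysis of link edges is in spirit related to the sunflower step, but it is aimed at the wrong target event.
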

	Several notes on \cref{main} are in order.
	
	\begin{itemize}
		\item The bound on the number of colours in the theorem of \citet{EL75} and in 
		\cref{main} is best possible (up to the multiplicative constant) because of complete hypergraphs\footnote{Let $G$ be the $(r + 1)$-uniform complete hypergraph on $n$ vertices, which has maximum degree $\Delta = \binom{n - 1}{r}\leq (\frac{en}{r})^r$. In any $d$-defective  $k$-colouring of $G$, at least $\frac{n}{k}$ vertices are monochromatic, implying $d\geq \binom{n/k - 1}{r} > (\frac{n}{2kr})^r \geq \frac{\Delta}{(2ek)^r}$. 
			Thus $k \geq \frac{1}{2e}(\frac{\Delta}{d})^{1/r} $, which is within a constant factor of the upper bound in \cref{main}.}. It remains tight even for $(r + 1)$-uniform hypergraphs with no complete $(r + 2)$-vertex subhypergraph. For example, a hypergraph construction by \citet[\S~3.2.2]{CM17} has this property\footnote{Let $\vv{e_i}$ denote the $r$-dimensional vector with 1 in the $i\textsuperscript{th}$ coordinate and 0 elsewhere. Let $G$ be the $(r + 1)$-uniform hypergraph with vertex set $\{1, \dotsc, n\}^r$ and whose edges are $\{\vv{v}, \vv{v_1}, \dotsc, \vv{v_r}\}$ where, for each $i$, $\vv{v_i} - \vv{v}$ is a positive multiple of $\vv{e_i}$. Any $r + 2$ vertices induce at most two edges, so $G$ has contains no $(r + 2)$-clique. $G$ has maximum degree $\Delta = (n - 1)^r < n^r$. Suppose that $V(G)$ is coloured with $k \leq (\Delta/(d + 1))^{1/r}/r < n/(r(d + 1)^{1/r})$ colours. Then there is a monochromatic set $S \subseteq V(G)$ of size at least $(d + 1)^{1/r} r n^{r - 1}$. Apply the following iterative deletion procedure to $S$: if, for some coordinate $j$ and integers $a_1, \dotsc, a_{j - 1}, a_{j + 1}, \dotsc, a_r \in \{1, \dotsc, n\}$, there are less than $(d + 1)^{1/r}$ vertices in $S$ whose $i\textsuperscript{th}$ coordinate is $a_i$ for all $i \neq j$, then delete all these vertices. Let $S'$ be the set remaining after applying all such deletions. Each step deletes less than $(d + 1)^{1/r}$ vertices and at most $rn^{r - 1}$ steps occur so $S'$ is non-empty. Let $\vv{v} \in S'$ have the smallest coordinate sum. By definition of $S'$, for each $i$, there are at least $(d + 1)^{1/r}$ vertices $\vv{v_i} \in S'$ with $\vv{v_i} - \vv{v}$ being a positive multiple of $\vv{e_i}$. Hence, $\vv{v}$ has degree at least $d + 1$ in $S'$. Therefore, every $d$-defective colouring of $G$ uses more than $(\Delta/(d + 1))^{1/r}/r$ colours.}.
		%%%%%%%%%%%%%%%
		\item The assumption $\Delta \geq d + 1$ in \cref{main} is reasonable, since if $\Delta \leq d$ then one colour suffices. The assumption that $\Delta \geq 50^{100r^4}$ enables the uniform constant 100 in the bound on $k$. Of course, one could drop the assumption and replace 100 by some constant $c_r$ depending on $r$.
		%%%%%%%%%%%%%%%
		\item The graph ($r = 1$) case of \cref{main}  (with $\floor{\frac{\Delta}{d + 1}}+1$ colours) is easily proved by a max-cut type argument (see \cref{maxcut}) as was first observed by \citet{Lovasz66}. See \citep{WoodSurvey} for a comprehensive survey on defective graph colouring.
		%%%%%%%%%%%%%%%
		\item If $G$ is a linear hypergraph (that is, any two edges intersect in at most one vertex), then \cref{main} may be proved directly with the Lov\'{a}sz local lemma. Non-linear hypergraphs are hard because the number of neighbours of a vertex $v$ is not precisely determined by the degree of $v$. See the start of \cref{sec:proof} for details.
		
		\item \cref{main} can be rephrased as saying that for any $k$, $G$ has a $k$-colouring with maximum monochromatic degree $\OO(\frac{\Delta}{k^r})$ for fixed $r$. This is similar to a result of \citet{BS02} who showed that for any $k$ every $(r + 1)$-uniform hypergraph with $m$ edges has a $k$-colouring with $\OO(\frac{m}{k^r})$ monochromatic edges of each colour. In this light, \cref{main} is a variant on so-called judicious partitions~\citep{HWY16,ZTY15,Haslegrave14,BS10,XuYu09,Scott05,BS04,ABKS03,BS02}.  %YanXu08,FHZ14,HZ17,LLS16,Li18
		
	\end{itemize}

	\subsection{Notation}\label{sec:notation}
	
	Let $G$ be a hypergraph, which consists of a finite  vertex-set $V(G)$ and an edge-set $E(G)\subseteq 2^{V(G)}$. Let \defn{$e(G)$} $\coloneqq \abs{E(G)}$. $G$ is \defn{$r$-uniform} if every edge has size $r$. The \defn{link hypergraph} of a vertex $v$ in $G$, denoted \defn{$G_v$}, is the hypergraph with vertex-set $V(G)\setminus\set{v}$ 
	and edge-set $\{e\subseteq V(G)\setminus\{v\} \colon e \cup \set{v} \in E(G)\}$. If $G$ is $(r + 1)$-uniform, then $G_{v}$ is $r$-uniform. The \defn{degree} of a set of vertices $S\subseteq V(G)$, denoted \defn{$\deg(S)$}, is the number of edges in $G$ that contain $S$. We often omit set parentheses, so $\deg(x)$ and $\deg(u, v)$ denote the number of edges containing $x$ and the number of edges containing both $u$ and $v$, respectively. Let $\textcolor{Maroon}{\Delta(G)} \coloneqq \max\{\deg(v) \colon v \in V(G)\}$.
	
	\subsection{Probabilistic Tools}\label{tools}
	
	We use the following standard probabilistic tools. 
	
	\begin{lemma}[Lov\'{a}sz local lemma~\citep{EL75}] \label{LLL}
		Let $\mathcal{A}$ be a set of events in a probability space such that each event in $\mathcal{A}$ occurs with probability at most $p$ and for each event $A \in \mathcal{A}$ there is a collection $\mathcal{A}'$ of at most $d$ other events such that $A$ is independent from the collection $(B \colon B \not\in \mathcal{A}' \cup \set{A})$. If $4pd \leq 1$, then with positive probability no event in $\mathcal{A}$ occurs. 
	\end{lemma}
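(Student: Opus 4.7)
The plan is to prove, by induction on $\abs{S}$, the auxiliary claim that for every $A \in \mathcal{A}$ and every finite $S \subseteq \mathcal{A} \setminus \set{A}$,
\[
\prob\bigl[A \,\big|\, \bigcap_{B \in S} \overline{B}\bigr] \leq 2p,
\]
and then to chain conditional probabilities to conclude that $\prob[\bigcap_{A \in \mathcal{A}} \overline{A}] > 0$. The base case $S = \emptyset$ is immediate from $\prob[A] \leq p \leq 2p$, and one may assume throughout that the conditioning events have positive probability (otherwise the claim is vacuous for that sub-case).

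For the inductive step I would partition $S$ into $S_1 \coloneqq S \cap \mathcal{A}'$ and $S_2 \coloneqq S \setminus S_1$, so that $\abs{S_1} \leq d$ and $A$ is independent of the collection indexed by $S_2$. Writing the conditional probability as a ratio,
\[
\prob\bigl[A \,\big|\, \bigcap_{B \in S} \overline{B}\bigr] = \frac{\prob[A \cap \bigcap_{B \in S_1} \overline{B} \mid \bigcap_{B \in S_2} \overline{B}]}{\prob[\bigcap_{B \in S_1} \overline{B} \mid \bigcap_{B \in S_2} \overline{B}]},
\]
the numerator is at most $\prob[A \mid \bigcap_{B \in S_2} \overline{B}] = \prob[A] \leq p$ by independence. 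For the denominator, a union bound on the complements combined with the inductive hypothesis (applied to each $B \in S_1$ with conditioning set $S_2$, whose size is strictly smaller than that of $S$ whenever $S_1 \neq \emptyset$) gives
\[
\prob\bigl[\bigcap_{B \in S_1} \overline{B} \,\big|\, \bigcap_{B \in S_2} \overline{B}\bigr] \geq 1 - \sum_{B \in S_1} 2p \geq 1 - 2pd \geq \tfrac{1}{2}.
\]
Dividing yields the desired bound of $2p$, closing the induction. When $S_1 = \emptyset$ the denominator is $1$ and the numerator is at most $p$ directly, so no induction is needed.

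To finish, enumerate $\mathcal{A} = \set{A_1, \dotsc, A_n}$ in any order and expand the joint probability via the chain rule:
\[
\prob\bigl[\bigcap_{i = 1}^{n} \overline{A_i}\bigr] = \prod_{i = 1}^{n} \prob\bigl[\overline{A_i} \,\big|\, \bigcap_{j < i} \overline{A_j}\bigr] \geq (1 - 2p)^{n} > 0,
\]
where each factor is at least $1 - 2p > 0$ by the auxiliary claim, since $4pd \leq 1$ with $d \geq 1$ forces $p < 1/2$ (the degenerate case $d = 0$ is trivial by mutual independence). The main obstacle is identifying the correct quantity to induct on: the naive bound $\prob[A] \leq p$ does not survive conditioning on complements of dependent events, and one must instead prove the slightly weaker bound $2p$ for \emph{every} such conditional probability. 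The factor of $2$ is precisely the slack needed for the denominator's union bound $1 - 2pd$ to stay above $1/2$ under the hypothesis $4pd \leq 1$.
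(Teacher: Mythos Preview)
The paper does not prove this lemma; it simply states it as a standard probabilistic tool with a citation to \citet{EL75}. Your argument is the classical inductive proof of the symmetric local lemma and is correct: the induction on $\abs{S}$, the numerator/denominator split along $S_1 = S \cap \mathcal{A}'$ and $S_2 = S \setminus S_1$, and the union bound on the denominator all go through as written, with the hypothesis $4pd \leq 1$ precisely ensuring the denominator stays at least $\tfrac{1}{2}$.
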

	
	\begin{lemma}[Markov's  inequality]\label{Markov}
		If $X$ is a nonnegative random variable and $a>0$, then 
		\begin{equation*}
			\prob(X\geq a) \leq \frac {\EX(X)}{a}.
		\end{equation*}
	\end{lemma}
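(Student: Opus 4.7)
The plan is to prove Markov's inequality by the classical ``indicator trick'': compare $X$ pointwise with a constant multiple of the indicator of the event $\set{X \geq a}$, and then take expectations. Since $X$ is nonnegative and $a > 0$, on the event $\set{X \geq a}$ we have $X \geq a$, while on the complement we have $X \geq 0 = a \cdot 0$. In both cases the inequality
\begin{equation*}
    X \geq a \cdot \mathbf{1}[X \geq a]
\end{equation*}
holds pointwise, where $\mathbf{1}[X \geq a]$ is the indicator of the event. This is the one genuine step; the rest is mechanical.

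Next I would take expectations of both sides, using monotonicity of expectation (valid because both sides are integrable, with the right-hand side bounded). This yields
\begin{equation*}
    \EX(X) \geq a \cdot \EX(\mathbf{1}[X \geq a]) = a \cdot \prob(X \geq a).
\end{equation*}
Dividing through by $a > 0$ delivers the claimed bound $\prob(X \geq a) \leq \EX(X)/a$.

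There is no substantive obstacle here: the only thing to be slightly careful about is to justify the pointwise inequality on the event $\set{X < a}$ using nonnegativity of $X$ (otherwise the argument fails, as Markov's inequality itself would fail without that hypothesis). One could alternatively phrase the proof by splitting the expectation as $\EX(X) = \EX(X \mathbf{1}[X \geq a]) + \EX(X \mathbf{1}[X < a])$ and dropping the (nonnegative) second term, then bounding the first by $a \cdot \prob(X \geq a)$; this is just a rearrangement of the same argument.
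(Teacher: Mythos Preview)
Your proof is correct and is the standard argument for Markov's inequality. The paper itself does not give a proof of this lemma at all; it is listed among the ``standard probabilistic tools'' in \cref{tools} and simply stated without justification, so there is nothing to compare against.
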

	
	\begin{lemma}[Chernoff bound]\label{Chernoff}
		Let $X \sim \Bin(n, p)$. For any $\epsilon \in [0, 1]$,
		\begin{align*}
			\prob(X \geq (1 + \epsilon)\EX(X)) & \leq \exp(-\epsilon^2 np/3), \\
			\prob(X \leq (1 - \epsilon)\EX(X)) & \leq \exp(-\epsilon^2 np/2).
		\end{align*}
	\end{lemma}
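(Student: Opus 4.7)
The plan is the classical moment-generating-function (MGF) method of Chernoff and Bernstein. Write $X = X_1 + \dotsb + X_n$ where the $X_i$ are independent $\text{Bernoulli}(p)$ random variables, so $\mu \coloneqq \EX(X) = np$. For the upper tail, fix a parameter $t > 0$ to be optimised later, and apply \cref{Markov} to the nonnegative variable $e^{tX}$:
\[
\prob(X \geq (1+\epsilon)\mu) = \prob(e^{tX} \geq e^{t(1+\epsilon)\mu}) \leq \EX(e^{tX}) \, e^{-t(1+\epsilon)\mu}.
\]
Independence gives $\EX(e^{tX}) = (1 - p + p e^t)^n$, and the inequality $1 + x \leq e^x$ then yields $\EX(e^{tX}) \leq \exp(np(e^t - 1))$. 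Choosing $t = \ln(1 + \epsilon) > 0$ produces
\[
\prob(X \geq (1+\epsilon)\mu) \leq \exp\parentheses*{-np\bigl((1+\epsilon)\ln(1+\epsilon) - \epsilon\bigr)}.
\]

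The upper tail then reduces to the elementary inequality $(1+\epsilon)\ln(1+\epsilon) - \epsilon \geq \epsilon^2/3$ for $\epsilon \in [0,1]$. I would verify this by calculus: setting $g(\epsilon) \coloneqq (1+\epsilon)\ln(1+\epsilon) - \epsilon - \epsilon^2/3$, one has $g(0) = g'(0) = 0$ and $g''(\epsilon) = 1/(1+\epsilon) - 2/3$, which is positive on $[0, 1/2)$, negative on $(1/2, 1]$, and satisfies $g'(1) = \ln 2 - 2/3 > 0$. Hence $g' \geq 0$ on $[0, 1]$ and therefore $g \geq 0$ on $[0, 1]$, closing the upper tail bound.

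The lower tail follows the identical template applied to $e^{-tX}$ for $t > 0$: \cref{Markov} gives $\prob(X \leq (1-\epsilon)\mu) \leq \EX(e^{-tX}) \, e^{t(1-\epsilon)\mu}$, factorisation together with $1 + x \leq e^x$ gives $\EX(e^{-tX}) \leq \exp(np(e^{-t} - 1))$, and optimising with $t = -\ln(1-\epsilon)$ (valid for $\epsilon \in [0, 1)$; $\epsilon = 1$ is immediate from $\prob(X = 0) = (1-p)^n \leq e^{-np}$) yields $\exp(-np(\epsilon + (1-\epsilon)\ln(1-\epsilon)))$. The matching inequality $\epsilon + (1-\epsilon)\ln(1-\epsilon) \geq \epsilon^2/2$ is cleaner and follows from the power-series identity
\[
\epsilon + (1-\epsilon)\ln(1-\epsilon) = \sum_{k \geq 2} \frac{\epsilon^k}{k(k-1)},
\]
(obtained by multiplying $\ln(1-\epsilon) = -\sum_{k \geq 1}\epsilon^k/k$ by $1-\epsilon$) whose $k = 2$ term already equals $\epsilon^2/2$ and whose remaining terms are nonnegative on $[0,1]$.

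Nothing here is subtle: the exponentiation-plus-Markov step is standard, and the MGF of a sum of independent Bernoullis factorises immediately. The main (mild) obstacle is simply the calculus verification of the two analytic inequalities relating $(1 \pm \epsilon)\ln(1 \pm \epsilon)$ to $\epsilon^2$, with the upper-tail inequality being slightly the fiddlier of the two because $g''$ changes sign on $[0,1]$.
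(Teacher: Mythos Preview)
Your proof is correct and is the standard MGF/Chernoff--Bernstein argument. The paper itself does not prove this lemma: it is listed in the ``Probabilistic Tools'' subsection as a classical fact and used as a black box, so there is no proof in the paper to compare against. Your write-up would serve perfectly well as a self-contained verification of the stated constants.
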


	We will need a version of Chernoff for negatively correlated random variables, for example, see \cite[Thm.~1]{IK10}. Boolean random variables $X_1, \dotsc, X_n$ are \defn{negatively correlated} if, for all $S \subseteq \{1, \dotsc, n\}$,
	\begin{equation*}
		\prob(X_i = 1 \textnormal{ for all } i \in S) \leq \prod_{i \in S} \prob(X_i = 1).
	\end{equation*}
	
	\begin{lemma}[Chernoff for negatively correlated variables]\label{Chernoffneg}
		Suppose $X_1, \dotsc, X_n$ are negatively correlated Boolean random variables with $\prob(X_i = 1) \leq p$ for all $i$. Then, for any $t \geq 0$,
		\begin{equation*}
			\prob\bigl(\sum_i X_i \geq pn + t\bigr) \leq \exp(-2t^2/n).
		\end{equation*}
	\end{lemma}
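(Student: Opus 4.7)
The plan is to use the exponential moment (Chernoff) method. Since $\EX \sum_i X_i = \sum_i \prob(X_i = 1) \leq pn$, it suffices to bound $\prob\bigl(\sum_i (X_i - \EX X_i) \geq t\bigr)$. Applying Markov to the nondecreasing map $x \mapsto e^{\lambda x}$ (with $\lambda \geq 0$ to be chosen later), this reduces to controlling $\EX[e^{\lambda \sum_i X_i}]$. The key step is to prove that negative correlation yields the factorisation
\begin{equation*}
	\EX\bigl[e^{\lambda \sum_i X_i}\bigr] \leq \prod_i \EX\bigl[e^{\lambda X_i}\bigr] \qquad \text{for every } \lambda \geq 0.
\end{equation*}

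To prove this factorisation, I exploit that each $X_i$ is Boolean, so that $e^{\lambda X_i} = 1 + (e^\lambda - 1) X_i$ is an exact linearisation. Expanding the product over $i$ and using that $\prod_{i \in S} X_i$ is the indicator of $\set{X_i = 1 \text{ for all } i \in S}$,
\begin{equation*}
	\prod_i e^{\lambda X_i} = \sum_{S \subseteq \set{1, \dotsc, n}} (e^\lambda - 1)^{\abs{S}} \prod_{i \in S} X_i.
\end{equation*}
Taking expectations, $(e^\lambda - 1)^{\abs{S}} \geq 0$ combined with the negative-correlation hypothesis bounds each term by $(e^\lambda - 1)^{\abs{S}} \prod_{i \in S} \prob(X_i = 1)$, and resumming recovers $\prod_i (1 + (e^\lambda - 1) \EX X_i) = \prod_i \EX[e^{\lambda X_i}]$, giving the claimed MGF inequality.

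With the factorisation in hand, set $\mu_i \coloneqq \EX X_i$. Using $\EX[e^{\lambda X_i}] = e^{\lambda \mu_i} \EX[e^{\lambda(X_i - \mu_i)}]$, the MGF inequality rearranges to $\EX\bigl[e^{\lambda \sum_i(X_i - \mu_i)}\bigr] \leq \prod_i \EX[e^{\lambda(X_i - \mu_i)}]$. Each centred variable $X_i - \mu_i$ has mean zero and lies in an interval of length $1$, so Hoeffding's lemma gives $\EX[e^{\lambda(X_i - \mu_i)}] \leq e^{\lambda^2/8}$, and hence $\EX\bigl[e^{\lambda \sum_i(X_i - \mu_i)}\bigr] \leq e^{n \lambda^2/8}$. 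Markov then gives $\prob\bigl(\sum_i (X_i - \mu_i) \geq t\bigr) \leq \exp(-\lambda t + n\lambda^2/8)$, and optimising at $\lambda = 4t/n$ yields the claimed $\exp(-2t^2/n)$. The only non-routine step is the MGF factorisation from negative correlation; it goes through cleanly precisely because the $X_i$ are Boolean, which makes the linearisation of $e^{\lambda X_i}$ exact and reduces the MGF to a polynomial in the joint indicators $\mathbb{1}[X_i = 1 \text{ for all } i \in S]$.
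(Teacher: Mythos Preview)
Your proof is correct. The MGF factorisation via the exact linearisation $e^{\lambda X_i} = 1 + (e^\lambda - 1)X_i$ is precisely the standard way to exploit the negative-correlation hypothesis for Boolean variables, and the remainder is the usual Hoeffding argument.

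Note, however, that the paper does not give its own proof of this lemma: it is listed among the probabilistic tools in \cref{tools} and simply cited from the literature (Impagliazzo--Kabanets). So there is no ``paper's proof'' to compare against; what you have written is essentially the standard argument behind that citation.
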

	
	Finally we need McDiarmid's bounded differences inequality~\citep{McDiarmid1989}.
	
	\begin{lemma}[McDiarmid's inequality]\label{McDiarmid}
		Let $T_1, \dotsc, T_n$ be $n$ independent random variables. Let $X$ be a random variable determined by $T_1, \dotsc, T_n$, such that changing the value of $T_j$ \textnormal{(}while fixing the other $T_i$\textnormal{)} changes the value of $X$ by at most $c_j$. Then, for any $t \geq 0$,
		\begin{equation*}
			\prob(X \geq \EX(X) + t) \leq \exp\Bigl(-\frac{2 t^2}{\sum_{i} c_{i}^2}\Bigr).
		\end{equation*}
	\end{lemma}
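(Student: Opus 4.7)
The plan is the standard Doob-martingale proof combined with Hoeffding's lemma and a Chernoff-style optimisation. Writing $X = f(T_1, \dotsc, T_n)$, I would set $Z_i \coloneqq \EX(X \mid T_1, \dotsc, T_i)$ for $0 \leq i \leq n$, so that $Z_0 = \EX(X)$, $Z_n = X$, and the martingale differences $D_i \coloneqq Z_i - Z_{i-1}$ sum to $X - \EX(X)$. The sequence $(Z_i)$ is a martingale with respect to the filtration generated by the $T_i$.

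The heart of the argument is the claim that, conditional on $T_1, \dotsc, T_{i-1}$, the difference $D_i$ lies in an interval of length at most $c_i$. Letting $T_i'$ be an independent copy of $T_i$, independence of the $T_j$ lets me write
\[
D_i = \EX\bigl(X \mid T_1, \dotsc, T_i\bigr) - \EX\bigl(X \mid T_1, \dotsc, T_{i-1}, T_i'\bigr),
\]
where on the right the outer expectation is over $T_i'$ only. The bounded-differences hypothesis, applied pointwise under the averaging over $T_{i+1}, \dotsc, T_n$, shows that replacing $T_i$ by $T_i'$ inside $f$ changes the value by at most $c_i$; hence, once $T_1, \dotsc, T_{i-1}$ are fixed, the range of $D_i$ is at most $c_i$.

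Next I would invoke Hoeffding's lemma: a mean-zero random variable supported in an interval of length $\ell$ has moment generating function at most $\exp(\lambda^2 \ell^2 / 8)$. Applied to $D_i$ conditional on $T_1, \dotsc, T_{i-1}$ and iterated through the tower property, this yields
\[
\EX\bigl(\exp(\lambda (X - \EX(X)))\bigr) \leq \exp\Bigl( \tfrac{\lambda^2}{8} \sum_{i=1}^{n} c_i^2 \Bigr).
\]
Applying Markov's inequality to $\exp(\lambda(X - \EX(X)))$ then gives $\prob(X \geq \EX(X) + t) \leq \exp\bigl(\tfrac{\lambda^2}{8}\sum_i c_i^2 - \lambda t\bigr)$, and optimising with $\lambda = 4t / \sum_i c_i^2$ produces the claimed exponent $-2t^2/\sum_i c_i^2$.

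The only genuine obstacle in this (very classical) argument is the bounded-range claim for $D_i$: one really does need the independence of the $T_j$ to realise the two conditional expectations by a single coupling against $T_i'$, and one must apply the $c_i$-bound \emph{inside} expectations over the later coordinates $T_{i+1}, \dotsc, T_n$. Everything else — Hoeffding's lemma, the tower-property iteration, and the Chernoff optimisation — is a routine computation.
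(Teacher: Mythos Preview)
Your proof is correct and is precisely the classical argument due to McDiarmid. However, the paper does not actually prove this lemma: it is listed in \cref{tools} as one of several standard probabilistic tools and is simply cited to \citet{McDiarmid1989} without proof. So there is no ``paper's own proof'' to compare against; your write-up would serve perfectly well as a self-contained justification if one were wanted.
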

	
	\section{Proof}\label{sec:proof}
	
	For motivation we first consider a na\"{i}ve application of the Lov\'{a}sz local lemma. Suppose $G$ is a linear $(r + 1)$-uniform hypergraph. Colour $G$ with $k \coloneqq \floor{100(\Delta/(d + 1))^{1/r}}$ colours uniformly at random. For each set $F$ of $d + 1$ edges all containing a common vertex, let $B_F$ be the event that the vertex set of $F$ is monochromatic. Then, since $G$ is linear, $p \coloneqq \prob(B_F) = k^{-r(d + 1)}$. For a fixed $F$, the number of $F'$ sharing a vertex with $F$ is at most $D \coloneqq (r(d + 1) + 1) \Delta (r + 1) \binom{\Delta}{d}$; here we have specified the vertex shared with $F$, the edge containing that vertex, the common vertex of the edges in $F'$, and finally the remaining $d$ edges of $F'$. Now $D \leq 3r^2 d \Delta (e\Delta/d)^d$ and so
	\begin{align*}
		4pD & \leq 4 \cdot 100^{-r(d + 1)} \bigl(\tfrac{d + 1}{\Delta}\bigr)^{d + 1} \cdot 3r^2 e^d d^{-d + 1} \Delta^{d + 1} \\
		& = 12 r^2 e^d \cdot 100^{-r(d + 1)} \cdot d(d + 1) \bigl(\tfrac{d + 1}{d}\bigr)^d \\
		& \leq 24 r^2 d^2 e^{d + 1} \cdot 100^{-r(d + 1)} \leq 1.
	\end{align*}
	Hence, by the Lov\'{a}sz local lemma, there is a colouring in which no $B_F$ occurs; that is, there is a $d$-defective $k$-colouring of $G$. It was crucial in this argument that $G$ was linear so that the powers of $\Delta$ in $D$ and $p$ cancelled out exactly. For non-linear $G$, the number of neighbours of a vertex $v$ is not determined by the degree of $v$ and so $p$ may be larger without a corresponding decrease in $D$. A more involved argument is required.
	
	\subsection{First Steps}
	
	Here we outline our colouring strategy before diving into the details. We are given an $(r + 1)$-uniform hypergraph $G$ with maximum degree $\Delta$ and wish to colour its vertices so that every vertex is in at most $d$ monochromatic edges. For a fixed colouring $\phi$, the \defn{monochromatic degree} of a vertex $v$, denoted \defn{$\deg_{\phi}(v)$}, is the number of monochromatic edges containing $v$ (which must have colour $\phi(v)$).
	
	First we colour the vertices of $G$ uniformly at random with $k$ colours where $k = \floor{49(\frac{\Delta}{d + 1})^{1/r}}$. Say a vertex is \defn{bad} if its monochromatic degree is greater than $d$ and \defn{good} otherwise. We are aiming for a colouring in which every vertex is good. The expected monochromatic degree of a vertex $v$ in such a colouring is $k^{-r} \deg(v) \leq k^{-r} \Delta \leq 49^{-r} (d + 1)$. In particular, each individual vertex has small (certainly, by Markov's inequality, at most $49^{-r}$) probability of being bad. However, the goodness of a vertex  $v$ depends on the colours assigned to vertices in the neighbourhood of $v$ and so $49^{-r}$ is not a sufficiently small probability to conclude (by, say, the Lov\'asz local lemma) that there is a particular colouring for which all vertices are good.
	
	Instead of colouring all of $G$ with a single-uniform colouring, we do so over many rounds. After a round (where we coloured a hypergraph $G$), any good vertices will keep their colours and be discarded (they have been coloured appropriately). Let $G'$ be the subhypergraph of $G$ induced by the bad vertices. In the next round we uniformly and randomly colour the vertices of $G'$ with a new palette of colours completely disjoint from those used in previous rounds. If the palettes all have the same size and the process runs for too many rounds, then we will end up using too many colours. However, if $\Delta(G') \leq 2^{-r} \Delta(G)$, then we can use half the number of colours in the next round and so use $\OO((\frac{\Delta}{d + 1})^{1/r})$ colours across all the rounds. Thus, our aim is to prove the following nibble-style lemma from which \cref{main} easily follows.

	\begin{lemma}\label{nibble}
		Fix non-negative integers $r,\Delta,d$ with $r \geq 1$ and $\Delta \geq \max\set{d + 1, 50^{50r^3}}$. Then every $(r + 1)$-uniform hypergraph $G$ with maximum degree at most $\Delta$ has a partial colouring with at most $49(\frac{\Delta}{d + 1})^{1/r} $ colours such that every coloured vertex has monochromatic degree at most $d$ and the subhypergraph $G'$ of $G$ induced by the uncoloured vertices satisfies
		$\Delta(G') \leq 2^{-r} \Delta$.
	\end{lemma}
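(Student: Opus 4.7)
The plan is to apply a single random colouring and then use the Lov\'asz local lemma to control the structure of the leftover subhypergraph induced by bad vertices.

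\textbf{Random colouring.} Give each vertex of $G$ a uniformly random colour from $[k]$, where $k \coloneqq \floor{49(\Delta/(d + 1))^{1/r}}$, independently. Call a vertex \emph{bad} if its monochromatic degree exceeds $d$. For any vertex $v$, the expected monochromatic degree is $\deg(v)/k^r \leq 49^{-r}(d + 1)$, so Markov's inequality gives $\prob(v \text{ is bad}) \leq 49^{-r}$. Let $X_v$ denote the number of edges through $v$ in which every vertex is bad. Bounding the probability that an edge is all-bad by the probability that any one of its vertices is bad gives $\EX(X_v) \leq 49^{-r}\Delta$, which is already much smaller than the target $2^{-r}\Delta$ since $(49/2)^r \geq 2$ for every $r \geq 1$.

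\textbf{LLL set-up.} I would uncolour exactly the bad vertices, so that the coloured vertices are automatically good and the maximum degree of the subhypergraph $G'$ induced by the bad vertices equals $\max\{X_v \colon v \text{ bad}\}$. To guarantee $X_v \leq 2^{-r}\Delta$ for every $v$, I apply the Lov\'asz local lemma to the events $A_v \coloneqq \{X_v > 2^{-r}\Delta\}$. Since $X_v$ is determined by the colours of vertices within graph-distance two of $v$, events $A_u$ and $A_v$ can be dependent only when $u$ and $v$ lie within distance four of each other, giving a dependency degree of at most $(r\Delta)^4$. The Lov\'asz local lemma will apply as soon as $\prob(A_v) \leq 1/(4(r\Delta)^4)$; the very strong assumption $\Delta \geq 50^{50r^3}$ is designed precisely to absorb this polynomial-in-$\Delta$ cost.

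\textbf{Main obstacle: concentration of $X_v$.} Strengthening the trivial Markov estimate $\prob(A_v) \leq (2/49)^r$ to an inverse polynomial in $\Delta$ is the heart of the proof. A na\"ive McDiarmid bound is too weak: changing a single vertex's colour can flip the badness of up to $r\Delta$ other vertices and therefore alter $X_v$ by as much as $\Delta$, so the Lipschitz constants entering the bounded-differences inequality are far too large. My plan is therefore to use a moment computation: bound $\EX(X_v^s)$ for a suitable large integer $s$ depending on $r$ by summing over ordered $s$-tuples of edges through $v$ and controlling the joint probability that a given tuple is simultaneously all-bad, exploiting the fact that vertex badnesses factor approximately into independent contributions once the vertices involved are sufficiently far apart in the hypergraph. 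Applying Markov's inequality to $X_v^s$ then delivers the required tail bound, and the Lov\'asz local lemma yields a colouring in which no event $A_v$ occurs; uncolouring the bad vertices produces the desired partial colouring.
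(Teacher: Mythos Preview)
Your framework is exactly the paper's: random $k$-colouring, define bad vertices, apply the Lov\'asz local lemma to the events $A_v = \{X_v > 2^{-r}\Delta\}$ with dependency degree $\OO((r\Delta)^4)$, and uncolour the bad vertices. The paper packages this as \cref{EachVertexTerrible}, which asserts $\prob(A_v)\le\Delta^{-5}$, and the deduction of the lemma from that is identical to yours.

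The gap is in the concentration step. Your plan is a moment computation on $X_v$, relying on the heuristic that ``vertex badnesses factor approximately into independent contributions once the vertices involved are sufficiently far apart.'' But every vertex whose badness enters $X_v$ lies in the link of $v$, hence at distance $1$ from $v$ and at distance at most $2$ from every other such vertex; there is no ``far apart'' regime available, and in a non-linear hypergraph these badness events can be strongly positively correlated (two neighbours of $v$ may share many edges). So the factoring you appeal to does not come for free, and you have not indicated any mechanism to produce it. A direct $s$-th moment bound would have to control, for $s$-tuples of edges through $v$, the joint probability that all $\le rs$ petal-vertices are simultaneously bad; without additional structure this is exactly as hard as the original problem.

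The paper manufactures the missing independence via a sunflower decomposition of the link hypergraph $G_v$: it partitions most of $E(G_v)$ into sunflowers with many petals and picks one vertex from each petal to form sets $S_i$. If $v$ is terrible, some $S_i$ must contain a $3^{-r}$-fraction of bad vertices. Because the petals of a sunflower are disjoint, the vertices of $S_i$ carry enough independence to get genuine tail bounds: for large $k$ the paper conditions on $S_i$ being nearly rainbow and shows the badness indicators are negatively correlated (Harris plus a negative-correlation Chernoff); for small $k$ it uses a max-cut partition and McDiarmid on an auxiliary hypergraph where the Lipschitz constants are controlled. This structural step is precisely what your moment sketch is missing.
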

	
	\begin{proof}[Proof of \cref{main} assuming \cref{nibble}]
		We start with a $(r + 1)$-uniform hypergraph $G$ with maximum degree at most $\Delta = \Delta_{0}$ for some $\Delta_{0} \geq \max\set{d + 1, 50^{100r^4}}$. Apply \cref{nibble} to get a partial colouring of $G$ where:
		\begin{itemize}
			\item every vertex has monochromatic degree at most $d$,
			\item at most $49(\frac{\Delta_0}{d + 1})^{1/r}$ colours are used, and
			\item the subhypergraph $G_1$ of $G$ induced by uncoloured vertices has $\Delta(G_1) \leq \Delta_1 = 2^{-r} \Delta_0$.
		\end{itemize}
		Iterate this procedure (using a palette of new colours each round) to obtain, for $i=0,1,\dots$, an induced subhypergraph $G_i$ of $G$ with $\Delta(G_i) \leq \Delta_i = 2^{-ri} \Delta$ such that $G[V(G) - V(G_i)]$ has been coloured with at most
		\begin{equation*}
			49 \bigl(\tfrac{\Delta_0}{d + 1}\bigr)^{1/r} + 49 \bigl(\tfrac{\Delta_1}{d + 1}\bigr)^{1/r} + \dotsb + 49 \bigl(\tfrac{\Delta_{i - 1}}{d + 1}\bigr)^{1/r} = 49 \bigl(\tfrac{\Delta}{d + 1}\bigr)^{1/r} (1 + 2^{-1} + \dotsb + 2^{-(i - 1)}) \leq 98 \bigl(\tfrac{\Delta}{d + 1}\bigr)^{1/r}
		\end{equation*}
		colours and every monochromatic degree is at most $d$. Continue carrying out rounds of colouring until $\Delta_i < d + 1$ or $\Delta_i < 50^{50r^3}$.
		
		First suppose that $\Delta_i < d + 1$ and so $\Delta(G_i) \leq d$. Use a single new colour on the entirety of $G_i$ to give a $d$-defective colouring of $G$. Now suppose that $d + 1 \leq \Delta_i < 50^{50r^3}$. Properly colour $G_i$ with $\Delta(G_i) + 1 \leq 50^{50r^3}$ colours. This gives a $d$-defective colouring of $G$ with at most
		\begin{equation*}
			98 \bigl(\tfrac{\Delta}{d + 1}\bigr)^{1/r} + 50^{50r^3} \leq 100 \bigl(\tfrac{\Delta}{d + 1}\bigr)^{1/r}
		\end{equation*}
		colours. The final inequality uses the fact that $\Delta \geq 50^{100r^4}$ and $d + 1 < 50^{50r^3}$.
	\end{proof}
	
	Recall that a vertex is bad for a colouring $\phi$ if it has monochromatic degree at least $d + 1$. Say that an edge $e$ is \defn{bad} for a colouring $\phi$ if every vertex in $e$ is bad (note that a bad edge is not necessarily monochromatic). Furthermore, say that a vertex is \defn{terrible} for a colouring $\phi$ if it is incident to more than $2^{-r} \Delta$ bad edges. \Cref{nibble} says that there is some colouring for which no vertex is terrible. The key to the proof of \cref{nibble} is to show that a vertex is terrible with low probability.
	
	In the remainder of the paper, we use the definitions of good, bad, and terrible given above and also set $k \coloneqq \floor{49(\frac{\Delta}{d + 1})^{1/r}}$.
	
	\begin{lemma}\label{EachVertexTerrible}
		Let $\Delta \geq 50^{50r^3}$. Let $G$ be an $(r + 1)$-uniform hypergraph with maximum degree at most $\Delta$.  In a uniformly random $k$-colouring of $V(G)$, each vertex $v$ of $G$ is terrible with probability at most $\Delta^{-5}$. \end{lemma}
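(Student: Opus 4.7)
The plan is to bound $\EX(Y_v)$, where $Y_v$ denotes the number of bad edges containing $v$, and then to show that $Y_v$ is concentrated below $2^{-r}\Delta$ with probability at least $1 - \Delta^{-5}$.

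\textbf{Expectation.} For any vertex $u$, conditional on $\phi(u)$, the monochromatic degree $\deg_\phi(u)$ counts edges through $u$ whose remaining $r$ vertices all receive $\phi(u)$'s colour; its expectation is at most $\Delta/k^r \leq (d + 1)/49^r$. A Markov argument gives only $\prob(u \text{ bad}) \leq 49^{-r}$, which is too weak. A sharper bound follows from the factorial-moment inequality $\mathbf{1}[\deg_\phi(u) \geq d + 1] \leq \binom{\deg_\phi(u)}{d + 1}$: expanding the right-hand side as a sum over $(d + 1)$-tuples $F$ of edges at $u$, each tuple is monochromatic with probability $k^{-(\abs{V(F)} - 1)} \leq k^{-r}$, and for the bulk of tuples $\abs{V(F)}$ is much larger than the minimum $r + 1$. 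For an edge $e = \{v, u_1, \dotsc, u_r\}$ to be bad, all $r$ of $u_1, \dotsc, u_r$ must simultaneously be bad, and the edges witnessing their badness involve largely disjoint sets of vertices, so the joint probability is much smaller than the single-vertex bound. Summing over the at most $\Delta$ edges at $v$ then produces $\EX(Y_v)$ polynomially smaller than $2^{-r}\Delta$.

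\textbf{Concentration} is the main obstacle. McDiarmid's inequality (\cref{McDiarmid}) applied directly to $Y_v$ as a function of the vertex colours is essentially useless: changing a single vertex's colour can flip the good/bad status of up to $r\Delta$ neighbours of $v$, each affecting up to $\Delta$ edges at $v$; the Lipschitz constants are then of order $r\Delta$ and the resulting concentration bound is trivial. My remedy is to replace $Y_v$ by an upper-bounding surrogate with much better Lipschitz structure. Using $\mathbf{1}[u \text{ bad}] \leq \binom{\deg_\phi(u)}{d + 1}$ and that each bad edge at $v$ contributes $r$ bad neighbours of $v$,
\begin{equation*}
    r\, Y_v \leq Z_v \coloneqq \sum_{u} \deg(u, v) \binom{\deg_\phi(u)}{d + 1},
\end{equation*}
where the sum is over neighbours $u$ of $v$. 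The surrogate $Z_v$ expands as a sum of indicators of the form ``the vertex-set of a specific $(d + 1)$-tuple of edges at $u$ is monochromatic'', each depending on at most $r(d + 1) + 1$ vertex colours. Either McDiarmid's inequality applied to $Z_v$ with these improved Lipschitz constants, or a high-moment Markov bound on $Z_v^m$ with $m$ of order $\log \Delta$, then yields $\prob(Z_v > r \cdot 2^{-r}\Delta) \leq \Delta^{-5}$, which implies the stated bound on $Y_v$. The hypothesis $\Delta \geq 50^{50r^3}$ is exactly what is needed to absorb the $r$-dependent constants in the Lipschitz bounds and provide the logarithmic slack demanded by the $\Delta^{-5}$ tail.
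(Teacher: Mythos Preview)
Your approach has a genuine gap: the surrogate $Z_v = \sum_{u} \deg(u,v)\binom{\deg_\phi(u)}{d+1}$ need not have expectation below $r\,2^{-r}\Delta$, so neither Markov, nor high moments, nor McDiarmid on $Z_v$ can give $\prob(Z_v > r\,2^{-r}\Delta)\le\Delta^{-5}$. The step where you pass from $\mathbf{1}[u\text{ bad}]$ to $\binom{\deg_\phi(u)}{d+1}$ is exactly where the non-linear difficulty bites: the claim that ``for the bulk of tuples $\abs{V(F)}$ is much larger than $r+1$'' is false in general.

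Take $r=2$, $d\ge 1$, and let every edge through $v$ be $\{v,u,y_j\}$ for a single fixed $u$ and distinct $y_1,\dotsc,y_\Delta$. Then $\deg(u,v)=\Delta$, and $\deg_\phi(u)=\mathbf{1}[\phi(v)=\phi(u)]\cdot N$ with $N\sim\Bin(\Delta,1/k)$. A direct computation gives
\[
\EX\!\left[\binom{\deg_\phi(u)}{d+1}\right]=\binom{\Delta}{d+1}k^{-(d+2)}\;\ge\;\bigl(\Delta/(d+1)\bigr)^{d+1}k^{-(d+2)}\;\approx\;k^{d}/49^{2(d+1)},
\]
so $\EX[Z_v]\gtrsim \Delta\cdot k^{d}/49^{2(d+1)}$, which for $k$ large (equivalently $\Delta\gg d$) is vastly bigger than $\Delta/2$. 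Moreover $\prob(Z_v>\Delta/2)=k^{-1}\prob(N\ge d+1)\approx k^{-1}\approx\Delta^{-1/2}$, nowhere near $\Delta^{-5}$. (Note that in this very example every $y_j$ has degree $1$ and is never bad, so $Y_v\equiv 0$ and the lemma is trivial; it is only your surrogate that fails.) Your McDiarmid suggestion fares no better: every one of the $\binom{\Delta}{d+1}$ indicator summands contains the vertex $v$, so flipping $\phi(v)$ alone can change $Z_v$ by $\Delta\binom{\Delta}{d+1}$, and $\sum_w c_w^2$ is hopeless.

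This is precisely the obstacle the paper isolates at the start of \cref{sec:proof}: in non-linear hypergraphs the codegrees can conspire so that no moment/Lipschitz replacement for $\mathbf{1}[u\text{ bad}]$ is simultaneously tight in expectation and well-behaved for concentration. The paper's route is structurally different: it applies a sunflower decomposition (\cref{sunflower}) to the link hypergraph $G_v$, so that if $v$ is terrible then some set $S_i$ of \emph{pairwise disjoint} petal representatives has a $3^{-r}$ fraction of bad vertices; disjointness restores enough independence to run a Chernoff-type bound (via negative correlation when $k$ is large, \cref{Slemmalargek}, and via a max-cut partition plus McDiarmid on a genuinely low-Lipschitz count when $k$ is small, \cref{Slemmasmallk}). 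Any fix to your argument would need some analogous device to neutralise heavy codegrees.
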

	
	\begin{proof}[Proof of \cref{nibble} assuming \cref{EachVertexTerrible}]
		Randomly and independently assign each vertex of $G$ one of $k$ colours.
		For each vertex $v$, let $A_v$ be the event that $v$ is terrible. By \cref{EachVertexTerrible}, $\prob(A_v)\leq \Delta^{-5}$. The event $A_v$ depends solely on the colours assigned to vertices in the closed second neighbourhood of $v$. Thus if two vertices $v$ and $w$ are at distance at least 5 in $G$, then $A_v$ and $A_w$ are independent. Thus each event $A_v$ is mutually independent of all but at most $2(r\Delta)^4$ other events $A_w$. Since $4 \Delta^{-5} \cdot 2(r\Delta)^4 = 8 r^4 /\Delta \leq 1$, by the Lov\'{a}sz local lemma, with positive probability, no event $A_v$ occurs. Thus, there exists a $k$-colouring $\phi$ of $G$ such that no vertex is terrible. Let $G'$ be the subgraph of $G$ induced by the bad vertices. Since no vertex is terrible, $\Delta(G')\leq 2^{-r}\Delta$. Uncolour all the bad vertices: every coloured vertex is good and so has monochromatic degree at most $d$.
	\end{proof}
	
	It remains to prove \cref{EachVertexTerrible}, which we do in \cref{FinalProof}. We have now reduced the question to a local property of a random $k$-colouring. 
	
	A vertex $v$ is terrible if it is bad and at least $2^{-r} \Delta$ edges in its link graph, $G_v$, are bad. Analysing the dependence between the badness of different edges in $G_v$ is difficult. We sidestep this issue by using a decomposition into sunflowers.
	
	A \defn{sunflower with $p$ petals} is a collection $A_1, \dotsc, A_p$ of sets for which $A_1 \setminus K, \dotsc, A_p \setminus K$ are pairwise disjoint where $K \coloneqq A_1 \cap \dotsb \cap A_p$ (that is, $A_i \cap A_j = K$ for all distinct $i, j$). $K$ is the \defn{core} of the sunflower and $A_1 \setminus K, \dotsc, A_p \setminus K$ are its \defn{petals}.
	
	If $A_1, \dotsc, A_p$ are distinct edges of a uniform hypergraph that form a sunflower, then the petals are all non-empty with the same size. The core may be empty in which case the sunflower is a matching of size $p$. In a random colouring, the colourings on different petals of a sunflower are independent. Hence, it will be useful to partition the edges of hypergraphs into sunflowers with many petals together with a few edges left over.
	
	\begin{lemma}[Sunflower decomposition]\label{sunflower}
		Let $H$ be an $r$-uniform hypergraph and $a$ be a positive integer. There are edge-disjoint subhypergraphs $H_1, \dotsc, H_s$ of $H$ such that:
		\begin{itemize}
			\item Each $H_i$ is a sunflower with exactly $a$ petals.
			\item $H' = H - (E(H_1) \cup \dotsb \cup E(H_s))$ has fewer than $(ra)^r$ edges.
		\end{itemize}
	\end{lemma}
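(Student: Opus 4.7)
The plan is a standard greedy extraction driven by the classical Erd\H{o}s--Rado sunflower lemma, which states that any $r$-uniform hypergraph with more than $r!(a - 1)^r$ edges contains a sunflower with $a$ petals. Since $r! \leq r^r$, we have $r!(a - 1)^r \leq (ra)^r$, so any $r$-uniform hypergraph with at least $(ra)^r$ edges already contains a sunflower with at least $a$ petals, from which one may select exactly $a$ to get a sunflower of the required kind.

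Given this, I would iterate as follows. Set $H^{(0)} \coloneqq H$. At each step $i \geq 1$, if $e(H^{(i - 1)}) \geq (ra)^r$, then apply Erd\H{o}s--Rado to $H^{(i - 1)}$ to extract a sunflower $H_i$ with exactly $a$ petals, and let $H^{(i)} \coloneqq H^{(i - 1)}$ with the edges of $H_i$ removed. If instead $e(H^{(i - 1)}) < (ra)^r$, stop, setting $s \coloneqq i - 1$ and $H' \coloneqq H^{(i - 1)}$. The process terminates since each step strictly decreases the edge count of a finite hypergraph. By construction the $H_1, \dotsc, H_s$ are pairwise edge-disjoint and each is a sunflower with exactly $a$ petals, and the stopping criterion gives $e(H') < (ra)^r$, as required.

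The only external ingredient is the Erd\H{o}s--Rado lemma itself, which I would simply cite; it has a short induction on $r$, where at the inductive step one either finds a matching of size $a$ outright, or else, since a maximum matching covers fewer than $ra$ vertices and meets every edge, some vertex lies in more than $(r - 1)!(a - 1)^{r - 1}$ edges and the inductive hypothesis applied to its link produces the sunflower. I do not anticipate any genuine obstacle: the numerical bound $(ra)^r$ is chosen precisely so that the sunflower-existence threshold and the leftover bound coincide, absorbed by the crude inequality $r! \leq r^r$, with no slack to manage.
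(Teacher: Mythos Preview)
Your proposal is correct and is essentially the paper's own argument: take a maximal (equivalently, greedy) collection of edge-disjoint sunflowers with $a$ petals, observe the remainder contains no such sunflower, and apply Erd\H{o}s--Rado to bound $e(H') \leq r!(a-1)^r < (ra)^r$. The paper simply cites Erd\H{o}s--Rado rather than sketching its proof, but the structure is identical.
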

	
	\begin{proof}
		Let $H_1, \dotsc, H_s$ be a maximal collection of edge-disjoint subhypergraphs of $H$ where each $H_i$ is a sunflower with exactly $a$ petals. So  $H'$ contains no sunflower with $a$ petals. By the Erd\H{o}s-Rado sunflower lemma~\citep{ER60}, $e(H') \leq r! (a - 1)^r < (ra)^r$ (see \citep{ALWZ21,BCW21,Rao2020} for recent improved bounds in the sunflower lemma).
	\end{proof}
	
	The proof of \cref{EachVertexTerrible} uses a sunflower decomposition to show that if a vertex is terrible, then some reasonably large set of vertices $S$ must have at least $3^{-r}$ proportion of its vertices being bad. As noted above, each vertex is bad with probability at most $49^{-r}$ and so we expect $49^{-r} \abs{S}$ bad vertices in $S$. We are able to show that the number of bad vertices in (a suitable) $S$ is not much more than the expected number with very small failure probability. This is accomplished in  \cref{Slemmalargek,Slemmasmallk} below, 
	which correspond respectively to the case of large and small $k$.
	
	\subsection{When \texorpdfstring{$k$}{k} is large: \texorpdfstring{$k \geq \Delta^{1/(6r^2)}$}{}}\label{sec:largek}
	
	Recall that $k = \floor{49(\frac{\Delta}{d + 1})^{1/r}}$ throughout. When $k$ is large we expect a medium-sized vertex-set $S$ to have close to $\abs{S}$ different colours appearing on it (that is, to be close to rainbow). If two vertices have different colours, then the events that they are bad will be negatively correlated and hence we expect only a small proportion of $S$ to be bad. The negative correlation is made precise in \cref{Chernoffforbad} and the upper tail concentration of the number of bad vertices in $S$ is established in \cref{Slemmalargek}.
	
	\begin{lemma}\label{Chernoffforbad}
		Let $S = \set{v_1, \dotsc, v_{\ell}}$ be a set of at most $k$ vertices in $G$ and let $D$ be the event that $v_1, \dotsc, v_{\ell}$ are all given different colours. Let $X$ be the number of bad vertices in $S$. Then, in a uniformly random $k$-colouring of $V(G)$, for any $t \geq 0$,
		\begin{equation*}
			\prob(X \geq \ell \cdot 49^{-r} + t \mid D) \leq \exp\bigl(-2t^2/\ell\bigr).
		\end{equation*}
	\end{lemma}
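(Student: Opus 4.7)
The plan is to apply Lemma \ref{Chernoffneg} to the Boolean indicators $X_i$ of the events $\set{v_i \text{ is bad}}$, so $X = X_1 + \dotsb + X_\ell$. Two things must be verified: the marginal bound $\prob(X_i = 1 \mid D) \leq 49^{-r}$ for each $i$, and that $X_1, \dotsc, X_\ell$ are negatively correlated conditional on $D$.

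For the marginal bound, the colour-permutation symmetry of $D$ reduces it to bounding $\prob(X_i = 1 \mid \phi(v_i) = c, D)$ for any fixed colour $c$. Under this conditioning, an edge $e \ni v_i$ that intersects $S \setminus \set{v_i}$ cannot be monochromatic since its other vertex in $S$ has colour different from $c$ by $D$; so $\deg_\phi(v_i)$ counts only edges $e \ni v_i$ with $e \cap S = \set{v_i}$ whose remaining vertices are all coloured $c$. The colours of $V(G) \setminus S$ are independent of $D$ and of $\phi(v_i)$, so each such edge is monochromatic with probability $k^{-r}$. Hence $\EX(\deg_\phi(v_i) \mid \phi(v_i) = c, D) \leq \deg(v_i)/k^r \leq \Delta/k^r \leq 49^{-r}(d + 1)$ by the choice of $k$, and Markov's inequality gives $\prob(X_i = 1 \mid \phi(v_i) = c, D) \leq 49^{-r}$; averaging over $c$ yields the claim.

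For the negative correlation, further condition on $\phi(v_j) = c_j$ for all $j \in \set{1, \dotsc, \ell}$ with the $c_j$'s distinct. For each $u \in V(G) \setminus S$ and colour $c$, set $Y_{u, c} = \mathbf{1}[\phi(u) = c]$. For fixed $u$ the tuple $(Y_{u, 1}, \dotsc, Y_{u, k})$ is a one-hot vector, hence negatively associated; tuples for different $u$'s are independent; so the whole collection $(Y_{u, c})_{u, c}$ is negatively associated. Under the above conditioning, the same reasoning as for the marginal bound shows that $X_i$ is an increasing function of the single slice $(Y_{u, c_i})_{u \in V(G) \setminus S}$. Since the $c_i$'s are distinct, these slices are indexed by pairwise disjoint sets, and the standard closure property of negative association under monotone functions on disjoint coordinates gives
\begin{equation*}
    \prob\Bigl(\bigcap_{i \in T} \set{X_i = 1} \,\Bigm|\, \phi(v_1) = c_1, \dotsc, \phi(v_\ell) = c_\ell\Bigr) \leq \prod_{i \in T} \prob\bigl(X_i = 1 \,\bigm|\, \phi(v_1) = c_1, \dotsc, \phi(v_\ell) = c_\ell\bigr)
\end{equation*}
for every $T \subseteq \set{1, \dotsc, \ell}$. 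By colour symmetry each factor on the right equals $\prob(X_i = 1 \mid D)$, and averaging over distinct tuples $(c_j)_j$ removes the extra conditioning, delivering $\prob(\bigcap_{i \in T} \set{X_i = 1} \mid D) \leq \prod_{i \in T} \prob(X_i = 1 \mid D)$.

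The main obstacle is this negative correlation step, as both the sharing of vertices of $V(G) \setminus S$ between the link graphs of different $v_i$'s and the conditioning on $D$ introduce dependence, so the $X_i$'s are not independent. The key idea is the one-hot encoding $Y_{u, c}$: it realises each $X_i$ as a monotone function of a disjoint slice of a single negatively associated family, once the colours on $S$ are pinned down. With both ingredients established, Lemma \ref{Chernoffneg} applied with $p = 49^{-r}$ and $n = \ell$ yields the desired tail bound.
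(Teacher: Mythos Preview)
Your proof is correct and follows the same high-level strategy as the paper: condition on the event that the $v_j$'s receive specified distinct colours $c_j$, observe that the badness of $v_i$ is then governed solely by which vertices outside $S$ receive colour $c_i$, deduce negative correlation of the $X_i$'s, transfer back to the conditioning on $D$ by colour symmetry, and apply \cref{Chernoffneg}.

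The technical implementation differs in two places. For the marginal bound $\prob(X_i=1\mid D)\le 49^{-r}$, the paper uses the Harris inequality (conditioned on $\phi(v_i)=c$, the event $B_i$ is increasing in the colour class $V_c$ while $D$ is non-increasing, so $\prob(B_i\mid D)\le\prob(B_i)$); your argument is more direct, exploiting that edges through $v_i$ meeting $S\setminus\{v_i\}$ are automatically non-monochromatic under $D$, so one can compute the conditional expectation of $\deg_\phi(v_i)$ outright. For the negative correlation, the paper argues via Harris and an induction on $|T|$ (grouping two events and using that $B_{i_1}\cap B_{i_2}$ is increasing in $V_{c_{i_1}}\cup V_{c_{i_2}}$), whereas you invoke the negative-association machinery: the one-hot indicators $(Y_{u,c})_{u,c}$ form an NA family, and each $X_i$ is an increasing function of the disjoint slice indexed by $c_i$, so the closure property of NA delivers the product bound in one stroke. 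Both routes are standard; yours packages the dependence more cleanly and avoids the inductive step, at the cost of appealing to a slightly heavier off-the-shelf lemma.
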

	
	\begin{proof}
		Let $B_j$ be the event $\set{v_j \textnormal{ is bad}}$ and $X_j$ be the indicator random variable for $B_j$ so $X = \sum_{j} X_j$. For an edge $e$ containing a vertex $v$, the probability $e$ is monochromatic is $k^{-r}$. Hence, the expected number of monochromatic edges containing $v$ is at most $\Delta k^{-r}$. Thus, $\prob(X_j = 1) \leq 49^{-r}$ by Markov's inequality (\cref{Markov}). 
		
		Fix distinct colours $c_1, \dotsc, c_{\ell}$ and let $V_j$ be the set of vertices given colour $c_j$. Conditioned on the event $C_j = \set{v_j \textnormal{ is coloured } c_j}$, $B_j$ is increasing in $V_j$, while $D$ is non-increasing in $V_j$. Hence, by the Harris inequality~\citep{Harris1960}, $\prob(B_j \cap D \mid C_j) \leq \prob(B_j \mid C_j) \prob(D \mid C_j)$. Using this and the symmetry of the colours gives
		\begin{equation*}
			\prob(B_j \mid D) = \prob(B_j \mid D \cap C_j) = \frac{\prob(B_j \cap D \mid C_j)}{\prob(D \mid C_j)} \leq \prob(B_j \mid C_j) = \prob(B_j).
		\end{equation*}
		But $\prob(B_j) \leq 49^{-r}$, so $\EX(X \mid D) = \sum_j \prob(B_j \mid D) \leq \ell \cdot 49^{-r}$.
		
		Let $C$ be the event $\set{\textnormal{each } v_i \textnormal{ is coloured } c_i}$. Conditioned on $C$, $B_j$ is increasing in $V_j$ and non-increasing in all other $V_i$. We claim the $B_i$ are negatively correlated on the event $C$. For $k = 2$ this is just the Harris inequality. Fix $k > 2$ and let $S$ be a set of indices: we need to show $\prob(\cap_{i \in S} B_i \mid C) \leq \prod_{i \in S} \prob(B_i \mid C)$. If $\abs{S} \leq 1$, then there is equality. Otherwise let $i_1, i_2 \in S$. Now $B_{i_1} \cap B_{i_2}$ is increasing in $V_{i_1} \cup V_{i_2}$ and non-increasing in all other $V_i$. By induction,
		\begin{equation*}
			\prob(\cap_{i \in S} B_i \mid C) \leq \prob(B_{i_1} \cap B_{i_2} \mid C) \cdot \prod_{i \in S \setminus \set{i_1, i_2}} \prob(B_i \mid C) \leq \prod_{i \in S} \prob(B_i \mid C).
		\end{equation*}
		By symmetry of the colours, $\prob(B_i \mid C) = \prob(B_i \mid D)$ for all $i$ and also $\prob(\cap_{i \in S} B_i \mid C) = \prob(\cap_{i \in S} B_i \mid D)$ for any set of indices $S$. In particular, the $B_i$ are negatively correlated on the event $D$. Applying \cref{Chernoffneg} to $X_1, \dotsc, X_\ell$ gives the result.
	\end{proof}
	
	\begin{lemma}\label{Slemmalargek}
		Let $S$ be a set of vertices of $G$ with $10^{6r} \leq \abs{S} \leq k^{1/2}$. In a uniformly random $k$-colouring of $V(G)$, with failure probability at most $2(e\abs{S}^{-1/2})^{\abs{S}^{1/2}}$, fewer than $3^{-r} \abs{S}$ vertices of $S$ are bad.
	\end{lemma}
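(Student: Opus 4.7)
The plan is to split $S$ into its rainbow part $R = \{v \in S : \sigma(v) \neq \sigma(u) \text{ for all } u \in S \setminus \{v\}\}$ and its conflict part $C = S \setminus R$, show that $|C|$ is tiny with very high probability, and then apply \cref{Chernoffforbad} to $R$ conditionally on the colouring of $S$.

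To handle $C$, order $S = \{v_1, \dots, v_{|S|}\}$ and set $I_i = \mathbf{1}\{\sigma(v_i) \in \{\sigma(v_1), \dots, \sigma(v_{i-1})\}\}$. For any history, $\prob(I_i = 1 \mid \sigma(v_1), \dots, \sigma(v_{i-1})) \leq (i-1)/k \leq 1/|S|$, using $|S| \leq k^{1/2}$. A union bound over $|S|^{1/2}$-subsets of indices then gives
\[
\prob\Bigl(\sum_i I_i \geq |S|^{1/2}\Bigr) \leq \binom{|S|}{|S|^{1/2}} |S|^{-|S|^{1/2}} \leq (e|S|^{-1/2})^{|S|^{1/2}}.
\]
Since a colour of multiplicity $m \geq 2$ in $S$ contributes $m$ to $|C|$ and $m - 1$ to $\sum_i I_i$, we have $|C| \leq 2\sum_i I_i$; off the rare event above we therefore have $|R| > |S| - 2|S|^{1/2}$.

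On this good event I would condition on $\sigma|_S = \tau$ for a fixed $\tau$ with $|R(\tau)| > |S| - 2|S|^{1/2}$ and apply the argument of \cref{Chernoffforbad} to $R(\tau)$. For $v \in R(\tau)$ the colour $\tau(v)$ is unique in $S$, so only edges $e \ni v$ with $e \cap S = \{v\}$ can be monochromatic (any other edge already contains a vertex of colour $\neq \tau(v)$); each such edge is monochromatic with probability $k^{-r}$ over the remaining colouring of $V(G) \setminus S$, so Markov gives $\prob(B_v \mid \sigma|_S = \tau) \leq \Delta/(k^r(d+1)) \leq 49^{-r}$. Moreover $B_v$ is an increasing function of $\{\mathbf{1}_{\sigma(w) = \tau(v)} : w \in V(G) \setminus S\}$, and for distinct $v, u \in R(\tau)$ these indicator families live on disjoint coordinates since $\tau(v) \neq \tau(u)$. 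Combining the negative association of the colour-class indicators on $V(G) \setminus S$ with the Harris-style argument of \cref{Chernoffforbad} shows that $\{B_v\}_{v \in R(\tau)}$ is negatively correlated in the sense of \cref{Chernoffneg}. Taking $t = (3^{-r} - 49^{-r})|S| - 2|S|^{1/2}$, which is at least $(3^{-r} - 49^{-r})|S|/2$ for $|S| \geq 10^{6r}$, \cref{Chernoffneg} yields
\[
\prob\bigl(|\{v \in R(\tau) : v \text{ bad}\}| \geq |R(\tau)|\cdot 49^{-r} + t \bigm| \sigma|_S = \tau\bigr) \leq \exp(-2t^2/|R(\tau)|),
\]
and a routine computation using $|S| \geq 10^{6r}$ shows this is at most $(e|S|^{-1/2})^{|S|^{1/2}}$.

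Combining the two failure modes: if at least $3^{-r}|S|$ vertices of $S$ are bad then either $|C| \geq 2|S|^{1/2}$ or, on the complement, the number of bad vertices in $R$ exceeds $3^{-r}|S| - |C| \geq 3^{-r}|S| - 2|S|^{1/2} \geq |R(\tau)|\cdot 49^{-r} + t$ (using $|R(\tau)| \leq |S|$). Each has probability at most $(e|S|^{-1/2})^{|S|^{1/2}}$, giving the claimed $2(e|S|^{-1/2})^{|S|^{1/2}}$. The main delicacy is ensuring that the negative-correlation argument of \cref{Chernoffforbad} survives the extra conditioning on the non-rainbow colours $\sigma|_{S \setminus R}$; it does, because once $\sigma|_S$ is fixed the only randomness relevant to the badness of each $v \in R$ lives in $V(G) \setminus S$, where the colour-class indicators retain the negative-association structure Harris exploits.
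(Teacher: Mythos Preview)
Your argument is correct and follows the same high-level strategy as the paper: show that with high probability almost all of $S$ is rainbow, then use negative correlation of the badness events among distinctly-coloured vertices to apply \cref{Chernoffneg}. The numerics all check out.

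There is one mild technical difference in how the conditioning is organised. The paper fixes a subset $S'\subset S$ of size $|S|-|S|^{1/2}$, conditions on the event $D$ that $S'$ is rainbow, invokes \cref{Chernoffforbad} as a black box, and then pays a union bound of $\binom{|S|}{|S|^{1/2}}$ over choices of $S'$ (absorbed into the final estimate). You instead condition on the entire colouring $\sigma|_S=\tau$, work with the random rainbow set $R(\tau)$, and average over $\tau$; this sidesteps the union bound over subsets but forces you to re-derive the negative correlation for the conditional measure rather than quoting \cref{Chernoffforbad} directly. Your derivation is in fact slightly cleaner: once $\sigma|_S$ is fixed, each $B_v$ for $v\in R(\tau)$ is monotone in the single colour class $V_{\tau(v)}\cap(V(G)\setminus S)$, and since the one-hot indicators $\{\mathbf{1}_{\sigma(w)=c}\}_{w,c}$ on $V(G)\setminus S$ are negatively associated with disjoint index sets for distinct $v$, negative correlation follows immediately---no Harris step is needed here, so your reference to a ``Harris-style argument'' is unnecessary.
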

	
	\begin{proof}
		Let $A$ be the event that the number of distinct colours on $S$ is at most $\abs{S} - \abs{S}^{1/2}$. We first give an upper bound for $\prob(A)$. The probability that a fixed vertex does not have a unique colour is at most $\abs{S}/k$. If $A$ does occur, then at least $\abs{S}^{1/2}$ vertices of $S$ do not have a unique colour. Hence,
		\begin{equation*}
			\prob(A) \leq \binom{\abs{S}}{\abs{S}^{1/2}} \biggl(\frac{\abs{S}}{k}\biggr)^{\abs{S}^{1/2}} \leq \binom{\abs{S}}{\abs{S}^{1/2}} \abs{S}^{-\abs{S}^{1/2}}.
		\end{equation*}
		If $A$ does not occur, then there is a subset $S' \subset S$ of size $\abs{S} - \abs{S}^{1/2}$ where the vertices are all given different colours. Fix such an $S'$ and let $X$ be the number of bad vertices in $S$ and $X'$ be the number of bad vertices in $S'$. Note that if $X' < 4^{-r} \abs{S'}$, then $X < 4^{-r} \abs{S'} + \abs{S}^{1/2} \leq 4^{-r} \abs{S} + \abs{S}^{1/2} \leq 3^{-r} \abs{S}$.
		
		Let $D$ be the event that all vertices of $S'$ get different colours. By \cref{Chernoffforbad} and the previous paragraph,
		\begin{align*}
			\prob(X \geq \abs{S} \cdot 3^{-r} \mid D) & \leq
			\prob(X' \geq \abs{S'} \cdot 4^{-r} \mid D) \\
			& \leq \prob(X' \geq \abs{S'} \cdot 49^{-r} + \abs{S'} \cdot 4^{-r} /\sqrt{2} \mid D) 
			\leq \exp(-\abs{S'} \cdot 4^{-2r}).
		\end{align*}
		Let $\overline{A}$ be the complement of $A$. Taking a union bound over all $S'$,
		\begin{equation*}
			\prob(\set{X \geq \abs{S} \cdot 3^{-r}} \cap \overline{A}) \leq \binom{\abs{S}}{\abs{S}^{1/2}} \cdot \exp(-\abs{S'} \cdot 4^{-2r}).
		\end{equation*}
		Finally,
		\begin{align*}
			\prob(X \geq \abs{S} \cdot 3^{-r}) & \leq \binom{\abs{S}}{\abs{S}^{1/2}} \bigl(\exp(-\abs{S'} \cdot 4^{-2r}) + \abs{S}^{-\abs{S}^{1/2}}\bigr) \\
			& \leq (e \abs{S}^{1/2})^{\abs{S}^{1/2}} \cdot 2 \abs{S}^{-\abs{S}^{1/2}} = 2(e\abs{S}^{-1/2})^{\abs{S}^{1/2}}. \qedhere
		\end{align*}

	\end{proof}
	
	\subsection{When \texorpdfstring{$k$}{k} is small: \texorpdfstring{$k \leq \Delta^{1/(6r^2)}$}{}}\label{sec:smallk}
	
	Recall that $k = \floor{49(\frac{\Delta}{d + 1})^{1/r}}$ throughout. We need a simple max cut lemma.
	
	\begin{lemma}[Max cut]\label{maxcut}
		Let $G$ be a hypergraph whose edges have size at most $r + 1$ and let $\ell$ be a positive integer. There is a partition $V_1 \cup \dotsb \cup V_{\ell}$ of $V(G)$ such that, for every vertex $x \in V_i$, the number of edges containing $x$ and at least one more vertex from $V_i$ is at most $r \deg(x)/\ell$.
	\end{lemma}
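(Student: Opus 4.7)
The plan is to choose the partition that minimises a carefully chosen potential function and then extract the required bound via a local-exchange argument. Specifically, I take $V_1 \cup \dotsb \cup V_\ell$ to be a partition of $V(G)$ that minimises
\[
\Phi \coloneqq \sum_{e \in E(G)} \sum_{i = 1}^{\ell} \binom{\abs{e \cap V_i}}{2},
\]
which counts (summed over all edges $e$) the number of same-part pairs of vertices contained in $e$. Such a minimiser exists because there are only finitely many partitions.

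The key computation is the effect on $\Phi$ of moving a single vertex $x \in V_i$ to another part $V_j$. For each edge $e \ni x$, write $a_e \coloneqq \abs{(e \setminus \set{x}) \cap V_i}$ and $b_e \coloneqq \abs{(e \setminus \set{x}) \cap V_j}$. The move changes $\abs{e \cap V_i}$ from $a_e + 1$ to $a_e$ and $\abs{e \cap V_j}$ from $b_e$ to $b_e + 1$, leaving the intersections with the other parts unchanged. Since $\binom{n + 1}{2} - \binom{n}{2} = n$, the contribution of $e$ to $\Phi$ changes by exactly $b_e - a_e$, so summing over $e \ni x$ the total change in $\Phi$ equals $P_j(x) - P_i(x)$, where $P_j(x) \coloneqq \sum_{e \ni x} \abs{(e \setminus \set{x}) \cap V_j}$. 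Minimality of $\Phi$ then forces $P_i(x) \leq P_j(x)$ for every $j$.

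Averaging this inequality over $j \in \set{1, \dotsc, \ell}$ gives
\[
\ell \cdot P_i(x) \leq \sum_{j = 1}^{\ell} P_j(x) = \sum_{e \ni x} (\abs{e} - 1) \leq r \deg(x),
\]
so $P_i(x) \leq r \deg(x)/\ell$. To finish, I would observe that the quantity we wish to bound, namely the number of edges containing $x$ and at least one other vertex of $V_i$, is at most $P_i(x)$: every such edge contributes at least one to the sum defining $P_i(x)$. The only real thinking is to guess the potential $\Phi$; once that is chosen the rest is routine and I do not anticipate any genuine obstacle.
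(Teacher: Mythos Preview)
Your proof is correct and is essentially the same as the paper's. Your potential $\Phi=\sum_{e}\sum_{i}\binom{\abs{e\cap V_i}}{2}$ is precisely the paper's $\sum_{i}\sum_{\{u,v\}\subseteq V_i}\deg(u,v)$ rewritten by swapping the order of summation, and your $P_j(x)=\sum_{e\ni x}\abs{(e\setminus\{x\})\cap V_j}$ equals the paper's $\sum_{u\in V_j}\deg(u,x)$; from there the local-exchange and averaging steps are identical.
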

	
	\begin{proof}
		Throughout the proof, vertices $u, v, x$ are distinct. Choose a partition $V_{1} \cup \dotsb \cup V_{\ell}$ of $V(G)$ into $\ell$ parts that minimises
		\begin{equation}\label{mc}
			\sum_{i} \sum_{u, v \in V_{i}} \deg(u, v).
		\end{equation}
		Fix a vertex $x$ and suppose it is in some part $V_{a}$. By minimality, for all $i$,
		\begin{equation*}
			\sum_{u \in V_a} \deg(u, x) \leq \sum_{u \in V_{i}} \deg(u, x),
		\end{equation*}
		or else we could increase \eqref{mc} by moving $x$ to $V_i$. But 
		\begin{equation*}
			\sum_{i} \sum_{u \in V_{i}} \deg(u, x) = \sum_{u \in V(G)} \deg(u, x) \leq r \deg(x),
		\end{equation*}
		and so $\sum_{u \in V_a} \deg(u, x) \leq r \deg(x)/\ell$. This last sum is at least the number of edges containing $x$ and at least one more vertex from $V_a$.
	\end{proof}
	
	Given a large vertex-set $S$ we aim to show that, with high probability, a small proportion of its vertices are bad. We use \cref{maxcut} to split $S$ into parts so that very few edges have two vertices in the same part. Consider an arbitrary part $P$. We will show that, with high probability, a small proportion of the vertices in $P$ are bad. We do this by first revealing the random $k$-colouring on $V(G) - P$. Since $k$ is small, we get strong concentration on the distribution of colours on $V(G) - P$. We then reveal the colouring on $P$ and use this concentration to show that it is unlikely that $P$ has a high proportion of bad vertices.
	
	\begin{lemma}\label{Slemmasmallk}
		Suppose $\Delta \geq 50^{50r^3}$, $k \leq \Delta^{1/r^2}$ and let $S$ be a set of at least $(3k)^{3r} \Delta^{1/(6r)}$ vertices of $G$. With failure probability at most $\Delta^{-6}$, in a uniformly random $k$-colouring of $V(G)$, fewer than $3^{-r} \abs{S}$ vertices of $S$ are bad.
	\end{lemma}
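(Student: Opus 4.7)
I plan to partition $S$ via the max-cut lemma and analyse each part in isolation through a two-stage revelation of the random colouring. Apply \cref{maxcut} to $G$ with $\ell \coloneqq (3k)^r$ parts, producing $V_1 \sqcup \dotsb \sqcup V_\ell = V(G)$ such that each $v \in V_i$ lies in at most $r\Delta/\ell$ edges meeting $V_i$ elsewhere. Using $k \geq 48(\Delta/(d+1))^{1/r}$, which holds because $\Delta \geq 50^{50r^3}$, this bound is at most $r(d+1)/144^r \leq (d+1)/2$. Set $P_i \coloneqq V_i \cap S$ and $T \coloneqq 3^{-r}|S|/(2\ell)$; parts $P_i$ with $|P_i| < T$ together contain fewer than $3^{-r}|S|/2$ vertices of $S$, which can be absorbed into the slack. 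For each remaining part $P$, which has $|P| \geq T \geq 3^r k^{2r}\Delta^{1/(6r)}/2$, I aim to show that fewer than $3^{-r}|P|/2$ of its vertices are bad with failure probability at most $\Delta^{-6}/\ell$; a union bound over parts then yields the lemma.

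\textbf{Two-stage argument.} Fix such a part $P$. First reveal $\phi$ on $V(G) \setminus P$. For each $v \in P$ and colour $c$, let $N_c(v)$ count the edges $e \ni v$ with $e \setminus v \subseteq V(G) \setminus P$ all coloured $c$, and put $B_v \coloneqq \set{c \colon N_c(v) \geq (d+1)/2}$. Since the edges $e \ni v$ containing another vertex of $P$ contribute at most $r\Delta/\ell \leq (d+1)/2$ to $v$'s monochromatic degree, every bad $v$ necessarily has $\phi(v) \in B_v$, and moreover $|B_v| \leq 2Y_v/(d+1)$, where $Y_v \coloneqq \sum_c N_c(v)$ counts edges at $v$ whose other vertices lie outside $P$ and share a common colour. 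Write $Y \coloneqq \sum_{v \in P} Y_v$; then $\EX[Y] \leq |P|\Delta k^{1-r} \leq |P|(d+1)k/48^r$. Conditional on the outside colouring, the events $\set{\phi(v) \in B_v}_{v \in P}$ are mutually independent with probabilities $|B_v|/k$ (each depends only on the single coordinate $\phi(v)$). On the event $\mathcal{G} \coloneqq \set{Y \leq 2\EX[Y]}$ the total conditional mean is at most $4|P|/48^r \leq 3^{-r}|P|/4$, so \cref{Chernoff} bounds the probability that more than $3^{-r}|P|/2$ of the $v \in P$ have $\phi(v) \in B_v$ by $\exp(-3^{-r}|P|/12)$, which is much smaller than $\Delta^{-6}/(2\ell)$ because $|P| \geq T$ dwarfs $\log \Delta$ under the hypothesis $\Delta \geq 50^{50r^3}$.

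\textbf{Concentration of $Y$ (main obstacle).} What remains is to show that $\mathcal{G}$ holds with failure probability at most $\Delta^{-6}/(2\ell)$. This is the delicate step: \cref{McDiarmid} is too weak, because changing one vertex's colour in $V(G)\setminus P$ can flip $\Theta(\Delta)$ indicators contributing to $Y$, so the $c_u$'s are of order $\Delta$ and the resulting concentration scale is useless. Instead I decompose $Y = \sum_c Y^{(c)}$, where $Y^{(c)}$ counts pairs $(v, e)$ with $v \in P$, $e \setminus v \subseteq V(G) \setminus P$, and every vertex of $e \setminus v$ coloured $c$. For each colour $c$, $Y^{(c)}$ is a degree-$r$ polynomial in the independent Bernoulli variables $\set{\mathbf{1}_{\phi(u) = c}}_{u \in V(G) \setminus P}$ (each of mean $1/k$), with mean $|P|(d+1)/48^r$. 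An appropriate polynomial-type upper-tail concentration bound, such as Kim--Vu or a direct $m$-th moment computation (with $m$ a small power of $\Delta^{1/(6r)}$) exploiting that edges may share only $r$ vertices, applied to each $Y^{(c)}$ and followed by a union bound over the $k \leq \Delta^{1/r^2}$ colours, should yield the required tail for $Y$. The factor $\Delta^{1/(6r)}$ in the hypothesis $|S| \geq (3k)^{3r}\Delta^{1/(6r)}$ is precisely the slack needed for the polynomial concentration to beat the target failure probability.
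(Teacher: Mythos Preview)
Your overall strategy is exactly the paper's: apply \cref{maxcut} to split $S$, absorb the small parts into the slack, and handle each large part $P$ by a two-stage exposure of the random colouring (first on $V(G)\setminus P$, then on $P$). Your quantity $Y$ coincides with what the paper calls $X$, namely the number of monochromatic edges (with multiplicity) in the auxiliary $r$-uniform multihypergraph $H_P \coloneqq \bigcup_{x\in P} G'_x$ on vertex set $V(G)\setminus P$, and your second-stage Chernoff step is essentially the paper's as well.

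The gap is in the concentration of $Y$. You discard \cref{McDiarmid} on the grounds that an individual Lipschitz constant $c_u$ can be of order $\Delta$, but what matters in McDiarmid's bound is $\sum_u c_u^2$, not $\max_u c_u$ or the total number of coordinates. Here $c_u = \deg_{H_P}(u)$, and since $\sum_u \deg_{H_P}(u) = r\,e(H_P) \le r\abs{P}\Delta$ one has
\begin{equation*}
\sum_u c_u^2 \;\le\; \Bigl(\max_u c_u\Bigr)\sum_u c_u \;\le\; \Delta \cdot r\abs{P}\Delta \;=\; r\abs{P}\Delta^2.
\end{equation*}
Taking $t$ of order $\abs{P}\Delta/k^{r-1}$ in \cref{McDiarmid} then yields failure probability at most $\exp\bigl(-c\,\abs{P}/k^{2(r-1)}\bigr)$, and since $\abs{P}\ge T \ge 3^{r}k^{2r}\Delta^{1/(6r)}/2$ this is comfortably below the target $\Delta^{-6}/(2\ell)$. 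This is precisely how the paper handles the step. Your proposed detour through Kim--Vu or high-moment estimates is left as an unexecuted ``should yield'', so as written the concentration of $Y$ is not established; but the detour is unnecessary, because the tool you rejected already does the job.
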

	
	\begin{proof}
		It will be helpful to partition $S$ into sets $P$ such that not too many edges meet $P$ in more than one vertex.  We therefore apply the max cut lemma, \cref{maxcut}, to $G$ with $\ell = r k^r$, and restrict the resulting partition to $S$.  We obtain a partition $\PP$ of $S$ into $r k^r$ parts such that, for every vertex $x \in S$, the number of edges containing $x$ and at least one more vertex from $x$'s part is at most $\deg(x)/k^r$. We say a part $P \in \PP$ is \defn{big} if $\abs{P} \geq \abs{S}/(50 r (3k)^r)$ and is \defn{small} otherwise.
		
		There are $r k^r$ parts in $\PP$, so the number of vertices of $S$ in small parts is at most $\abs{S}/(50 r (3k)^r) \cdot r k^r = 0.02 \cdot 3^{-r} \abs{S}$. Hence, if $3^{-r} \abs{S}$ vertices of $S$ are bad, then at least $0.98 \cdot 3^{-r}$ proportion of the vertices in big parts are bad, so some big part $P$ has at least $0.98 \cdot 3^{-r} \abs{P}$ bad vertices. We now focus on a big part $P \in \PP$ and show that, with failure probability at most $\Delta^{-8}$, at most $0.98 \cdot 3^{-r} \abs{P}$ vertices of $P$ are bad.
		
		For each vertex $x \in P$, let $G'_{x}$ be the $r$-uniform graph on $V(G) - P$, whose edges are those $e$ with $e \cup \set{x} \in E(G)$ (that is, $G'_{x}$ is the link graph of $x$ restricted to $V(G) - P$). Define the $r$-uniform auxiliary (multi)hypergraph $H_P$ to have vertex set $V(G) - P$ and edge set
		\begin{equation*}
			E(H_P) = \bigcup_{x \in P} E(G'_{x}),
		\end{equation*}
		where edges are counted with multiplicity. Let $\phi$ be a uniformly random $k$-colouring of $V(G)$ and $\phi'$ be the restriction of $\phi$ to $V(G) - P$. Reveal $\phi'$ and let $X$ be the number of monochromatic edges of $H_P$, again counted with multiplicity.
		
		We now apply McDiarmid's inequality to show that $X$ concentrates. First note that $e(H_P) \leq \abs{P} \cdot \Delta$ and $\EX(X) = e(H_P) k^{-(r - 1)} \leq \abs{P} \cdot \Delta k^{-(r - 1)}$. For a vertex $v \in V(H_P)$, changing $\phi'(v)$ changes the value of $X$ by at most $\deg_{H_P}(v)$. Now,
		\begin{equation*}
			\sum_{v} \deg_{H_P}(v)^{2} \leq \Delta \sum_{v} \deg_{H_P}(v) = r \Delta e(H_P) \leq r \Delta^2 \abs{P}.
		\end{equation*}
		By McDiarmid’s inequality (\cref{McDiarmid}),
		\begin{align*}
			\prob\Bigl(X \geq \frac{1.1 \cdot \Delta \abs{P}}{k^{r - 1}}\Bigr) 
			\leq \prob\Bigl(X \geq \EX(X) + \frac{0.1 \cdot \Delta \abs{P}}{k^{r - 1}}\Bigr) 
			& \leq \exp\Bigl(- \frac{\abs{P}}{50 r k^{2(r - 1)}}\Bigr) \\
			& \leq \exp\Bigl(- \frac{\abs{S}}{2500 r^2 \cdot 3^r \cdot k^{3r - 2}}\Bigr) \\
			& \leq \exp\bigl(- k^2 \Delta^{1/(6r)}/(2500 r^2)\bigr) \leq \Delta^{-8}/2.
		\end{align*}
		For a vertex $x \in P$, say a colour is \defn{$x$-unhelpful} if there are more than $(49^r - 1) \Delta/k^r$ monochromatic edges of $G'_x$ of that colour. Say $x$ is \defn{unhelpful} if there are more than $0.45 \cdot 3^{-r} k$ $x$-unhelpful colours. Note that if $x$ is unhelpful, then the number of monochromatic edges in $G'_x$ is greater than $0.45 (49^r - 1) \cdot \Delta \cdot 3^{-r} k^{-(r - 1)}$. Hence, if more than $0.48 \cdot 3^{-r} \cdot \abs{P}$ vertices of $P$ are unhelpful, then the number of monochromatic edges in $H_P$ is greater than $1.1 \cdot \Delta \abs{P}/k^{r - 1}$. We have just shown this occurs with probability less than $\Delta^{-8}/2$. Hence, with failure probability at most $\Delta^{-8}/2$, at least $(1 - 0.48 \cdot 3^{-r})\abs{P}$ vertices of $P$ are helpful.
		
		Suppose that at least $(1 - 0.48 \cdot 3^{-r})\abs{P}$ vertices of $P$ are helpful; call the set of helpful vertices $P'$. Now reveal $\phi$ on $P$. For each vertex $x \in P'$, the probability that $x$ gets given an $x$-unhelpful colour is less than $0.45 \cdot 3^{-r}$. Let $Y$ be the number of $x \in P'$ coloured with an $x$-unhelpful colour. For different $x \in P'$, these events are independent (we have already revealed $\phi$ on $V(G) - P$) and so we may couple $Y$ with a random variable $Z \sim \Bin(\abs{P'}, 0.45 \cdot 3^{-r})$ so that $Y \leq Z$. Hence, by the Chernoff bound (\cref{Chernoff}),
		\begin{align*}
			\prob(Y \geq 0.5 \cdot 3^{-r} \abs{P'}) 
			\leq \prob(Z \geq 0.5 \cdot 3^{-r} \abs{P'}) 
			& \leq \prob(Z \geq 1.1 \cdot \EX(Z)) \\
			& \leq \exp(-0.45 \cdot 3^{-r} \abs{P'}/300) \\
			& \leq \exp\bigl(- k^{2r} \Delta^{1/(6r)}/(6000r)\bigr) \leq \Delta^{-8}/2.
		\end{align*}
		Hence, with failure probability at most $\Delta^{-8}/2 + \Delta^{-8}/2 = \Delta^{-8}$, at least $(1 - 0.5 \cdot 3^{-r}) \abs{P'} \geq (1 - 0.98\cdot 3^{-r}) \abs{P}$ vertices $x$ of $P$ are coloured with an $x$-helpful colour.
		
		We now show that if a vertex $x$ is given an $x$-helpful colour, then $x$ will be a good vertex (for $\phi$). Indeed, there are at most $\deg(x)/k^r \leq \Delta/k^r$ edges of $G$ containing $x$ that have at least one more vertex in $P$ and, as $x$ is given an $x$-helpful colour, there are at most $(49^r - 1)\Delta/k^r$ other monochromatic edges containing $x$. Hence, with failure probability at most $\Delta^{-8}$, at least $(1 - 0.98 \cdot 3^{-r}) \abs{P}$ vertices of $P$ are good, that is, at most $0.98 \cdot 3^{-r} \abs{P}$ vertices of $P$ are bad.
		
		Finally, taking a union bound over the big parts shows that the probability some big part $P$ has at least $0.98 \cdot 3^{-r} \abs{P}$ bad vertices is at most $r k^r \Delta^{-8} \leq r \Delta^{-8 + 1/r} \leq \Delta^{-6}$, as required. 
	\end{proof}

	%%%%%%%%%%%%%%%%%%%
	\subsection{Proof of \texorpdfstring{\cref{EachVertexTerrible}}{Lemma 8}} 
	\label{FinalProof}
	
	To prove \cref{EachVertexTerrible} we use the sunflower decompositions given by \cref{sunflower} to show that if a vertex is terrible, then some reasonably large set of vertices $S$ must have at least $3^{-r}$ proportion of its vertices being bad. \Cref{Slemmalargek,Slemmasmallk} show that this is unlikely. 
	
	\begin{proof}[Proof of \cref{EachVertexTerrible}]
		Recall that $\Delta \geq 50^{50r^3}$. Fix a vertex $v$ of $G$ and consider the link graph $G_v$, which is an $r$-uniform hypergraph. Recall that an edge of $G_v$ is \defn{bad} if all its vertices are bad and is \defn{good} otherwise. If $v$ is terrible, then at least $2^{-r} \Delta$ edges of $G_v$ are bad.
		
		First suppose that $k \geq \Delta^{1/(6r^2)}$. By \cref{sunflower}, there are edge-disjoint subgraphs $G_{1}, \dotsc, G_{s}$ of $G_v$ each of which is a sunflower with exactly $\floor{\Delta^{1/(12r^2)}}$ petals and such that $e(G_v - E(G_1 \cup \dotsb \cup G_s)) < r^r \cdot \Delta^{1/(12r)} \leq 6^{-r} \Delta$. Let $G' = G_1 \cup \dotsb \cup G_s$. For each $G_i$, choose a vertex from each petal to form a vertex-set $S_i$. If $v$ is terrible, then the number of bad edges in $G'$ is at least
		\begin{equation*}
			\bigl(2^{-r} - 6^{-r}\bigr) \Delta \geq 3^{-r} \Delta \geq 3^{-r} e(G').
		\end{equation*}
		Hence, if $v$ is terrible, then there is some $i$ for which at least $3^{-r} e(G_i)$ edges of $G_i$ are bad. But, since $S_i$ contains exactly one vertex from each petal of $G_i$, at least $3^{-r} \abs{S_i}$ vertices of $S_i$ are bad. Also, each $S_i$ has size $\floor{\Delta^{1/(12r^2)}} \geq \Delta^{2/(25r^2)} \geq 50^{4r} \geq 10^{6r}$ and $\floor{\Delta^{1/(12r^2)}} \leq k^{1/2}$. Hence, by \cref{Slemmalargek}, at least $3^{-r} \abs{S_i}$ vertices of $S_i$ are bad with probability at most 
		\begin{equation*}
			2(e\abs{S}^{-1/2})^{\abs{S}^{1/2}} \leq 2(e\Delta^{-1/(25r^2)})^{\Delta^{1/(25r^2)}} \leq 2(\Delta^{-1/(50r^2)})^{50^{2r}} \leq 2(\Delta^{-1/(50r^2)})^{400r^2} = 2\Delta^{-8}.
		\end{equation*}
		Taking a union bound over $i$ shows that $v$ is terrible with probability at most $2s\Delta^{-8} \leq \Delta^{-5}$.
		
		Now suppose that $k \leq \Delta^{1/(6r^2)}$. By \cref{sunflower}, there are edge-disjoint subgraphs $G_{1}, \dotsc, G_{s}$ of $G_v$ each of which is a sunflower with at least $\Delta^{1/r}/(6r)$ petals and such that $e(G_v - E(G_{1} \cup \dotsb \cup G_{s})) < 6^{-r} \Delta$. Let $G' = G_{1} \cup \dotsb \cup G_{s}$. For each $G_i$, choose a vertex from each petal to form a vertex-set $S_i$. If $v$ is terrible, then the number of bad edges in $G'$ is at least 
		\begin{equation*}
			\bigl(2^{-r} - 6^{-r}\bigr)\Delta \geq 3^{-r} \Delta \geq 3^{-r} e(G').
		\end{equation*}
		Hence, if $v$ is terrible, then there is some $i$ for which at least $3^{-r} e(G_i)$ edges of $G_i$ are bad and so at least $3^{-r} \abs{S_i}$ vertices of $S_i$ are bad. Now, $(3k)^{3r} \Delta^{1/(6r)} \leq 3^{3r} \Delta^{1/(2r)} \Delta^{1/(6r)} \leq \Delta^{1/r}/(6r) \leq \abs{S_i}$. Hence, by \cref{Slemmasmallk}, at least $3^{-r} \abs{S_i}$ vertices of $S_i$ are bad with probability at most $\Delta^{-6}$. Taking a union bound over $i$ shows that $v$ is terrible with probability at most $s\Delta^{-6} \leq \Delta^{-5}$.
	\end{proof}
	
	\section{Open problems}
	
	As noted in the introduction, Erd\H{o}s and Lov\'{a}sz proved that every $(r + 1)$-uniform hypergraph $G$ with maximum degree at most $\Delta$ has chromatic number $\chi(G) = \OO(\Delta^{1/r})$. \Citet{FM13} improved this to $\OO((\Delta/\log\Delta)^{1/r})$ when $G$ is a linear hypergraph and there have been similar improvements~\citep{CM15,CM16,LP22} when $G$ satisfies other sparsity conditions (such as being triangle-free\footnote{A \defn{triangle} in a hypergrph consists of edges $e, f, g$ and vertices $u, v, w$ such that $u, v \in e$ and $v, w \in f$ and $w, u \in g$ and $\set{u, v, w} \cap e \cap f \cap g = \emptyset$.}).
	
	It would be interesting to know whether logarithmic improvements occur for defective colourings of sparse hypergraphs. \Citet{FM08} showed that there exist $(r + 1)$-uniform linear hypergraphs $G$ with maximum degree $\Delta$ and $\chi(G) = \Omega((\Delta/\log\Delta)^{1/r})$. Consider a $d$-defective $k$-colouring of $G$ (where $d \geq 2$). Each colour class induces a linear $(r + 1)$-uniform hypergraph with maximum degree $d$ and so is $\OO((d/\log d)^{1/r})$-colourable. In particular,
	\begin{equation*}
		k = \Omega\Bigl(\bigl(\tfrac{\Delta}{\log\Delta} \cdot \tfrac{\log d}{d}\bigr)^{1/r}\Bigr).
	\end{equation*}
	We conjecture this is tight.
	\begin{conjecture}
		Every $(r + 1)$-uniform linear hypergraph is $k$-colourable with defect $d \geqslant 2$, where
		\begin{equation*}
			k = \OO\Bigl(\bigl(\tfrac{\Delta}{\log\Delta} \cdot \tfrac{\log d}{d}\bigr)^{1/r}\Bigr).
		\end{equation*}
	\end{conjecture}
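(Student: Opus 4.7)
The plan is to combine the nibble method of this paper with the semi-random approach developed by \citet{FM13} for proper colouring of linear hypergraphs. The conjectured bound is the natural interpolation between the two regimes: for $d = \OO(1)$ it recovers the Frieze--Mubayi bound $k = \OO((\Delta/\log\Delta)^{1/r})$, while for $d$ close to $\Delta$ it matches the main theorem's bound $k = \OO((\Delta/d)^{1/r})$.

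The central step would be to prove a strengthened version of \cref{nibble} that exploits linearity. Concretely, one aims to show that every $(r+1)$-uniform linear hypergraph with maximum degree $\Delta$ admits a partial colouring using $k = \OO((\Delta \log d / (d \log\Delta))^{1/r})$ colours such that every coloured vertex has monochromatic degree at most $d$ and the subhypergraph on uncoloured vertices has maximum degree at most $2^{-r}\Delta$. Given such an improved nibble lemma, iterating it geometrically as in the proof of \cref{main} would yield the conjectured bound, since the per-round budgets $k_i$ shrink geometrically with $\Delta_i$ and telescope.

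The natural route to the improved nibble is a semi-random activation procedure: each vertex is independently ``active'' with some probability $q$, each active vertex is coloured uniformly from a palette of $k$ colours, and a vertex is retained only if it lies in at most $d$ monochromatic edges. Linearity feeds into the analysis in two ways. First, any two edges through a vertex $v$ meet only at $v$, so the events that different edges through $v$ are monochromatic are mutually independent conditional on $\phi(v)$; this gives sharp Chernoff-type concentration for the monochromatic degree. Second, the sunflower decomposition and max-cut machinery of \cref{sec:largek,sec:smallk} should simplify in the linear setting, where the core of any sunflower of link-edges has size at most one, so one can expect a smaller $k$ to suffice to make the probability of being terrible inverse-polynomial in $\Delta$.

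I expect the main obstacle to lie in the regime of small $d$ (say $d \leq \log\Delta$). With $k$ as conjectured, the expected monochromatic degree $\Delta/k^{r} = \Theta(d \log\Delta/\log d)$ at a fully coloured vertex is much larger than the admissible defect $d$, so in a naive single-shot colouring almost every vertex is bad and Chernoff alone does not give a sufficiently small failure probability. The activation probability $q$ must therefore be carefully calibrated (probably of order $(\log d/\log\Delta)^{1/r}$) and the analysis must track how the defect budget at each vertex evolves across many rounds. Frieze--Mubayi handle the analogous difficulty in the proper-colouring case by tracking an ``edge health'' invariant across the nibble; the principal technical hurdle is extending this bookkeeping to a per-vertex $d$-defect budget, so that an edge is considered ``resolved'' when it lands in a vertex whose budget is not yet exhausted, rather than enforcing zero defect.
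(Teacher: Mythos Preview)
The statement you are attempting to prove is a \emph{conjecture} in the paper's open problems section; the paper does not prove it and offers no proof sketch beyond the lower-bound argument preceding the conjecture. There is therefore no ``paper's own proof'' to compare your proposal against.

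Your proposal is not a proof but a research plan, and you yourself say as much: you identify the central obstacle (the small-$d$ regime, where with $k$ of the conjectured order the expected monochromatic degree $\Delta/k^r \asymp d\log\Delta/\log d$ far exceeds the allowed defect $d$) and you do not claim to have resolved it. The suggestion to calibrate an activation probability $q$ of order $(\log d/\log\Delta)^{1/r}$ and to track a per-vertex defect budget across rounds is reasonable as a first guess, but none of the actual work has been done: you have not specified the invariant that is to be maintained, shown that it is maintained with high enough probability for the local lemma to apply, or verified that the total colour budget telescopes to the conjectured bound. The observation that linearity makes the monochromatic-edge indicators through a fixed vertex conditionally independent is correct and would indeed simplify parts of the analysis, but this alone does not bridge the gap between expected defect $\Theta(d\log\Delta/\log d)$ and target defect $d$.

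In short: the conjecture is open, and your write-up is an outline of a plausible line of attack with its main difficulty honestly flagged but unresolved. It should not be presented as a proof.
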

	
	Finally, it would be interesting to extend \cref{main} to the list colouring setting.
	
	\bibliographystyle{DavidNatbibStyle}
	\bibliography{thebib}
\end{document}